\makeatletter \@addtoreset{equation}{section}
\newtheorem{theorem}{Theorem}[section]
\newtheorem{lemma}[theorem]{Lemma}
\newtheorem{problem}[theorem]{Problem}
\newtheorem{proposition}[theorem]{Proposition}
\newtheorem{example}[theorem]{Example}
\newtheorem{definition}[theorem]{Definition}
\newtheorem{corollary}[theorem]{Corollary}
\newtheorem{remark}[theorem]{Remark}
\newcommand{\B}{\mathcal{B}}
\newcommand{\Z}{\mathbb{Z}}
\newcommand{\C}{\mathbb{C}}
\newcommand{\N}{\mathbb{N}}
\def\shift{\operatorname{shift}}
\begin{document}

\title{The Mean Transform and The Mean Limit of an Operator}

\author{Fadil Chabbabi }
\address{Universit\'e  Lille, UFR de Math\'ematiques, Laboratoire CNRS-UMR 8524 P. Painlev\'e, 
59655 Villeneuve d'Ascq  Cedex, France}
\email{Fadil.Chabbabi@univ-lille3.fr}

\author{Ra\'ul E. Curto }
\address{Department of Mathematics, University of Iowa, Iowa City, Iowa 52242, USA}
\email{raul-curto@uiowa.edu}

\author{Mostafa Mbekhta }
\address{Universit\'e  Lille, UFR de Math\'ematiques, Laboratoire CNRS-UMR 8524 P. Painlev\'e, 
59655 Villeneuve d'Ascq  Cedex, France}
\email{Mostafa.Mbekhta@math.univ-lille1.fr}

\thanks{The first and third named authors were partially supported by Labex CEMPI (ANR-11-LABX-0007-01). \ The second named author was partially supported by NSF grant DMS-1302666.} 

\keywords{ normal operator,  quasinormal operator, polar decomposition, mean transform}

\subjclass[2010]{47A05, 47A10, 47B49, 46L40}

\date{}

\begin{abstract}
Let $T\in\B(\mathcal{H})$ be a bounded linear operator on a Hilbert space $\mathcal{H}$, and let $T \equiv V|T|$ be the polar decomposition of $T$. \ The mean transform of $T$ is defined by $\widehat{T}:=\frac{1}{2}(V|T|+|T|V)$. \ In this paper we study the iterates of the mean transform and we define the mean limit of an operator as the limit (in the operator norm) of those iterates. \ We obtain new estimates for the numerical range and numerical radius of the mean transform in terms of the original operator. \ For the special class of unilateral weighted shifts we describe the precise relationship between the spectral radius and the mean limit, and obtain some sharp estimates.
\end{abstract}

\maketitle


\section{Introduction}
Let $\mathcal{H}$ be a Hilbert space equipped with its inner product $\langle .,.\rangle$. \ We denote by $\B(\mathcal{H})$  the algebra of all bounded linear operators on $\mathcal{H}$, and by $\mathcal{U}(\mathcal{H})$  the set of unitary operators on $\mathcal{H}$. \ For an arbitrary operator $T\in \B(\mathcal{H})$, we denote by $\mathcal{R}(T)$, $\mathcal{N}(T)$ and  $T^*$  the range,  the null subspace, and the operator adjoint  of $T$, respectively. The numerical range of $T$ is the set 
$$W(T):=\Big\{ \langle Tx,x\rangle ~~:~~ x\in \mathcal{H},~~ \|x\|=1\Big\},$$
 and the numerical radius of $T$ is defined as 
$$w(T):=\sup\big\{|\lambda|~~ : ~~\lambda\in W(T)\big\}.$$
We  refer  the reader to  \cite{sw} for more information about the numerical range and numerical radius.

We also let $\sigma(T)$ denote the spectrum of $T$, and $r(T)$ denote its spectral radius.
An operator $T$ is said to be quasinormal if it commutes with $T^*T$. \ For $0< p \le 1$, we say that $T$ is $p$-hyponormal if $(T^*T)^p\geq (TT^*)^p$. \ In the case when $p=1$ and $p=\frac{1}{2}$, the operator $T$ is called hyponormal and semi-hyponormal, respectively.
 
As usual, for $T\in \B(\mathcal{H})$ we denote the modulus of $T$ by $|T|:=(T^*T)^{1/2}$ and we shall always write, without further
mention, $T=V|T|$ to be the canonical polar decomposition  of $T$, where $V$ is the appropriate partial isometry satisfying  $\mathcal{N}(V)=\mathcal{N}(T)$. \ The Aluthge transform of $T$ was defined in \cite{alu} by 
$$
\Delta(T):=|T|^{\frac{1}{2}}V|T|^{\frac{1}{2}}.
$$
The mean transform of $T$, recently introduced in \cite{hyy}, is given as  
\begin{equation}
\widehat{T}:=\frac{1}{2}(V|T|+|T|V). \label{mean}
\end{equation}
We refer the reader to \cite{alu, kp3, kp2, kp1, hyy, jkp} for such operator transforms.

It is well known  that the quasinormal operators are exactly the fixed points of the Aluthge transform and of the mean transform (see \cite{hyy}). 
 The sequences of iterates of mean and Aluthge transforms of an operator $T$ are denoted  by $\widehat{T}^{(n)}$ and $\Delta^{(n)}(T)$ 
(respectively), with $\widehat{T}^{(0)}=\Delta^{(0)}(T)=T$,  $\widehat{T}^{(n+1)}=\widehat{\widehat{T}^{(n)}}$ and $\Delta^{(n+1)}(T):=\Delta(\Delta^{(n)}(T))$. \   A list of detailed and informative articles on the subject includes \cite{ams, kp1, hyy, jkp}.
 
In the next sections we establish some new properties of the mean transform. \ We also introduce the mean limit of an operator, and we study the mean transform iterates in some particular cases. \ Moreover, we obtain several new relations between the Aluthge and mean transforms for unilateral weighted shifts. \ For this class of operators, we obtain some sharp estimates for the spectral radius of the mean transform and the mean limit. 

\section{Some results about the mean transform }
 
 Contrary to what happens with the Aluthge transform, the mean transform does depend on the polar decomposition of the given operator. \ For example, consider
 $
T=
\begin{pmatrix}
0 & 1 \\
0 & 0
\end{pmatrix}
$ acting on $\C^2$.
The canonical polar decomposition of $T$ is $T=V|T|$, where $|T|=\sqrt{T^*T}=\begin{pmatrix}
0 & 0 \\
0 & 1
\end{pmatrix}$
and 
$V=\begin{pmatrix}
0 & 1 \\
0 & 0
\end{pmatrix}.$
On the other hand, we can also write $T=U_{max}|T|$, where $U_{max}=\begin{pmatrix}
0 & 1 \\
1 & 0
\end{pmatrix}$ is unitary. \ This is the so-called {\it maximal} polar decomposition of $T$, since the partial isometry is unitary. \ In this case,  
$$U_{max}|T|+|T|U_{max}=\begin{pmatrix}
0 & 1 \\
1 & 0
\end{pmatrix}\ne V|T|+|T|V=\begin{pmatrix}
0 & 1 \\
0 & 0
\end{pmatrix}, $$
which shows that the mean transform depends on the polar decomposition. \ In what follows, we will always use the canonical polar decomposition when dealing with the mean transform.

\begin{proposition}\label{th1} Let $T\in \B(\mathcal{H})$ be an arbitrary operator. \ Then we have 
$$\mathcal{N}(\widehat{T})=\mathcal{N}(T).$$
In particular $\widehat{T}=0$ if and only if $T=0$.
\end{proposition}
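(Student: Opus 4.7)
The plan is to prove the two inclusions $\mathcal{N}(T) \subseteq \mathcal{N}(\widehat{T})$ and $\mathcal{N}(\widehat{T}) \subseteq \mathcal{N}(T)$ separately, exploiting the defining feature of the canonical polar decomposition that $\mathcal{N}(V) = \mathcal{N}(|T|) = \mathcal{N}(T)$. The example preceding the proposition (which shows the statement fails for the maximal polar decomposition) already indicates that any proof must use this coincidence of null spaces.

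The forward inclusion is immediate: if $x \in \mathcal{N}(T)$, then $|T|x = 0$ (since $\||T|x\|^{2} = \|Tx\|^{2}$) and $Vx = 0$ (since $\mathcal{N}(V) = \mathcal{N}(T)$), so both summands $V|T|x$ and $|T|Vx$ in $\widehat{T}x$ vanish.

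For the reverse inclusion, I would start from $\widehat{T}x = 0$, i.e., $V|T|x = -|T|Vx$, and apply $V^{*}$ on the left. Since $V^{*}V$ is the orthogonal projection onto the initial space of $V$, which is $\overline{\mathcal{R}(|T|)} = \mathcal{N}(|T|)^{\perp}$, and since $|T|x$ already lies in $\mathcal{R}(|T|)$, we get $V^{*}V|T|x = |T|x$, producing the identity
\[
|T|x = -V^{*}|T|Vx.
\]
Taking the inner product with $x$ and moving $V^{*}$ across yields $\langle |T|x, x\rangle = -\langle |T|Vx, Vx\rangle$. The left side is $\||T|^{1/2}x\|^{2} \geq 0$ while the right side is $\leq 0$, so both must vanish; in particular $|T|^{1/2}x = 0$, whence $|T|x = 0$ and therefore $Tx = V|T|x = 0$.

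There is no real obstacle beyond finding the $V^{*}$ trick: once one multiplies by $V^{*}$ and recognizes that $V^{*}V$ acts as the identity on $\mathcal{R}(|T|)$ (this is where the canonical choice of $V$ is essential), the positivity of $|T|$ finishes the argument. The final clause ($\widehat{T} = 0$ iff $T = 0$) is then just the case $\mathcal{N}(\widehat{T}) = \mathcal{N}(T) = \mathcal{H}$.
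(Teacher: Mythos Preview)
Your proof is correct and is essentially identical to the paper's own argument: both show the forward inclusion directly from $\mathcal{N}(V)=\mathcal{N}(|T|)=\mathcal{N}(T)$, and for the reverse inclusion both multiply $V|T|x+|T|Vx=0$ on the left by $V^{*}$, use $V^{*}V|T|=|T|$, take the inner product with $x$, and invoke positivity of $|T|$ and $V^{*}|T|V$ to conclude $|T|^{1/2}x=0$. The only difference is cosmetic (you write the equation with a minus sign on one side rather than as a sum equal to zero).
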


\begin{proof} Let $T=V|T|$ be the canonical polar decomposition of $T$, and let $x\in \mathcal{N}(T)=\mathcal{N}(V)$. \ Then $|T|x=Vx=0$ and thus $\widehat{T}x=\frac{1}{2}(V|T|x+|T|Vx)=0$. \ This show that 
$\mathcal{N}(T)\subseteq  \mathcal{N}(\widehat{T})$. 

Conversely, if $x\in \mathcal{N}(\widehat{T})$ then 
$$
V|T|x+|T|Vx=0,
$$
and hence $$|T|x+V^*|T|Vx=V^*(V|T|+|T|V)x=0.$$
It follows that 
$$
 \langle |T|x,x \rangle + \langle V^*|T|Vx,x \rangle = \langle |T|x+V^*|T|Vx,x \rangle =0.
$$
Since $|T|$ and $V^*|T|V$ are both positive, we obtain 
$$
\||T|^{\frac{1}{2}}x\|= \langle|T|x,x \rangle=0.
$$
As a consequence, $$\mathcal{N}(\widehat{T})\subseteq \mathcal{N}(|T|)=\mathcal{N}(T),$$
as desired. \end{proof}

\begin{theorem} \label{thm1} Let $T\in\B(\mathcal{H})$.  Then the following statements are equivalent. 
\begin{enumerate}[(i)]
\item $T$ is invertible.
\item $\widehat{T}$ is invertible and $\mathcal{R}(T)$ is closed. 
\end{enumerate}
\end{theorem}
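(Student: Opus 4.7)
The strategy is to handle the two implications separately; the converse direction is the interesting one.

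For $(i)\Rightarrow(ii)$, if $T$ is invertible then $|T|=(T^{*}T)^{1/2}$ is positive and bounded below by some $\varepsilon>0$, and the partial isometry in the polar decomposition is automatically unitary (injective and with full range). Factoring
\[
\widehat{T}\;=\;\tfrac12\,V\bigl(|T|+V^{*}|T|V\bigr)
\]
exhibits $\widehat{T}$ as the product of a unitary with a sum of two positive operators, the first of which is $\ge\varepsilon I$. The sum is therefore invertible, hence so is $\widehat{T}$, and closedness of $\mathcal{R}(T)$ is immediate.

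For $(ii)\Rightarrow(i)$, the plan has three steps. First, Proposition~\ref{th1} gives $\mathcal{N}(T)=\mathcal{N}(\widehat{T})=\{0\}$, which promotes the partial isometry $V$ to an isometry, so in particular $\mathcal{R}(V)$ is closed and $\mathcal{H}=\mathcal{R}(V)\oplus\mathcal{R}(V)^{\perp}$. Second, and this is the crux, I show that $V$ must actually be unitary. Splitting $|T|Vx=VV^{*}|T|Vx+(I-VV^{*})|T|Vx$ gives the orthogonal decomposition
\[
\widehat{T}x\;=\;\tfrac12\,V\bigl(|T|x+V^{*}|T|Vx\bigr)\;+\;\tfrac12\,(I-VV^{*})|T|Vx,
\]
where the first summand lies in $\mathcal{R}(V)$ and the second in $\mathcal{R}(V)^{\perp}$. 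For a vector $y\in\mathcal{R}(V)^{\perp}$ in the range of $\widehat{T}$, equating components forces the first summand to vanish, so $|T|x+V^{*}|T|Vx=0$ (since $V$ is injective); pairing this with $x$ yields $\||T|^{1/2}x\|^{2}+\||T|^{1/2}Vx\|^{2}=0$, hence $|T|x=0$ and $x\in\mathcal{N}(T)=\{0\}$, which gives $y=0$. Since $\widehat{T}$ is surjective by hypothesis, this forces $\mathcal{R}(V)^{\perp}=\{0\}$, so $V$ is unitary. Third, $\overline{\mathcal{R}(T)}=\mathcal{R}(V)=\mathcal{H}$; the closed-range hypothesis then upgrades this to $\mathcal{R}(T)=\mathcal{H}$, and combined with injectivity, $T$ is invertible.

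The main obstacle is the second step. Unlike the multiplicative form of the Aluthge transform, the additive structure of $\widehat{T}$ makes it hard to read off information about $V$ directly; the idea is to project onto $\mathcal{R}(V)^{\perp}$, where the cancellation leaves behind a purely positive expression in $|T|$ and $V^{*}|T|V$ that can be forced to vanish. I also expect the closed-range hypothesis to be genuinely necessary: it is precisely what bridges the gap between $T$ having dense range (which follows once $V$ is unitary) and $T$ being surjective.
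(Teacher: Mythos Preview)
Your proof is correct and follows essentially the same route as the paper. Both directions agree: for $(i)\Rightarrow(ii)$ you factor $\widehat{T}=\tfrac12 V(|T|+V^{*}|T|V)$ with $V$ unitary and the bracket strictly positive; for $(ii)\Rightarrow(i)$ both arguments first use Proposition~\ref{th1} to make $V$ an isometry, then exploit the positivity of $|T|+V^{*}|T|V$ together with surjectivity of $\widehat{T}$ to force $V$ to be unitary, and finally invoke the closed-range hypothesis. The only difference is packaging in the middle step: the paper observes that $V^{*}\widehat{T}=\tfrac12(|T|+V^{*}|T|V)$ is a positive operator with right inverse $(\widehat{T})^{-1}V$, hence genuinely invertible, which algebraically yields $VV^{*}=I$; you instead project $\widehat{T}x$ orthogonally onto $\mathcal{R}(V)\oplus\mathcal{R}(V)^{\perp}$ and argue geometrically that no nonzero vector in $\mathcal{R}(V)^{\perp}$ can lie in $\mathcal{R}(\widehat{T})$. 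Your version is a bit more explicit about where the positivity is used, while the paper's is slicker; the content is the same.
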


\begin{proof} Let $T=V|T|$ be the canonical polar decomposition  of $T$. \ 
$(i)\Rightarrow (ii)$ \ We assume that $T$ is invertible; then $\mathcal{R}(T)=\mathcal{H}$ is closed. \ Moreover, $V$ is unitary and the operators $|T|$ and $V^*|T|V$ are positive and invertible. \ Hence $|T|>0$, $V^*|T|V>0$ and  $V^*\widehat{T}=\frac{1}{2}(|T|+V^*|T|V)>0$ is invertible. \ It follows that $\widehat{T}=\frac{1}{2} V(|T|+V^*|T|V)$ is also invertible.

$(ii) \Rightarrow (i)$ \ Assume now that $\widehat{T}$ is invertible. \ From 
Proposition \ref{th1}, $T$ is one-to-one, so $V$ is isometry, i.e. $V^*V=I$. \
 It follows that $V^*\widehat{T}=\frac{1}{2}(|T|+V^*|T|V)$ is also 
invertible, and its inverse is $(\widehat{T})^{-1}V$. Therefore, $V$ is 
unitary, since it maps $(\mathcal{N}(T)^{\perp}$ isometrically onto $\overline{\mathcal{R}(T)}$. \ Therefore, $\mathcal{N}(T)=\mathcal{N}(T^*)=\{0\}$. \ 
Since $\mathcal{R}(T)$ is closed,   $\mathcal{R}(T)=\overline{\mathcal
{R}(T)}= (\mathcal{N}(T^*))^{\perp}=\mathcal{H}$. \ Hence $T$ is invertible. 
\ This completes the proof.
\end{proof}

\begin{remark} In Theorem \ref{thm1} (ii), the condition ``$\mathcal{R}(T)$ is closed'' is required; without it, the reverse implication is false, as shown by the following example.
\end{remark}

\begin{example}
Let us denote by $(e_n)_{n\in \Z}$ the canonical basis of $\ell^2(\Z)$, and by $T:\ell^2 (\Z)\to \ell^2(\Z)$ the weighted bilateral shift defined  by $Te_n=\alpha_n e_{n+1}$ for all $n\in \Z$, where 

$$
\alpha_n = \left\{
    \begin{array}{ll}
        1 & \mbox{if $n$   even } \\
        \frac{1}{n^2} & \mbox{if $n$  odd } . 
    \end{array}
\right.
$$
The  mean transform  $\widehat{T}$ is also a weighted shift, and we have  $\widehat{T}e_n= \widehat{\alpha}_ne_{n+1}$ for $n\in \Z$, where $$\widehat{\alpha}_n=\dfrac{\alpha_n+\alpha_{n+1}}{2}=\left\{
    \begin{array}{ll}
        \dfrac{1+\frac{1}{(n+1)^2}}{2} & \mbox{if $n$  even } \\
        
       ~~ \dfrac{1+\frac{1}{n^2}}{2}& \mbox{if $n$  odd }. 
    \end{array}
\right. 
$$ 
Clearly, $Te_{2n+1}\underset{n\to \infty}{\longrightarrow 0}$, and therefore the operator $T$ is not invertible. \ On the other \newline hand, we have  $1 \geq \widehat{\alpha}_n\geq \frac{1}{2}$  for all $n\in \Z$, and from this it follows that $\widehat{T}$ is invertible.  \qed
\end{example}
\begin{remark} In general we have:

(1)  $\sigma(T) \ne \sigma(\widehat{T})$ (see \cite{hyy});

(2)  $(\widehat{T})^{-1} \ne \widehat{T^{-1}}$  (see  \cite{jkp}).

\end{remark}

\begin{proposition}\label{pr1} Let $T\in \B(\mathcal{H})$. \ Then the following properties hold.
 
 $(i)$ \ For all $\alpha \in \C$, $\widehat{(\alpha T)}=\alpha\widehat{T}$.
 
 $(ii)$ \ For every unitary or anti-unitary  operator $U : \mathcal{H} \to \mathcal{H}$, we have  
 $$\widehat{UTU^*}=U\widehat{T}U^*.$$
 
 $(iii)$ \ $\widehat{T}=I\iff T=I.$
\end{proposition}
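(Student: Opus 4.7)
The plan is to handle each of (i), (ii), (iii) by first identifying the canonical polar decomposition of the operator appearing on the left-hand side, and then substituting into the definition \eqref{mean}.

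For (i), when $\alpha = 0$ both sides vanish (use Proposition~\ref{th1} for the left side). When $\alpha \neq 0$, I would write $\alpha = |\alpha|e^{i\theta}$ and observe that $\alpha T = (e^{i\theta}V)(|\alpha||T|)$, where $e^{i\theta}V$ is a partial isometry with $\mathcal{N}(e^{i\theta}V) = \mathcal{N}(V) = \mathcal{N}(\alpha T)$ and $|\alpha||T| \geq 0$. By uniqueness, this is the canonical polar decomposition of $\alpha T$, and direct substitution into \eqref{mean} yields $\widehat{\alpha T} = \tfrac{1}{2}\bigl((e^{i\theta}V)(|\alpha||T|) + (|\alpha||T|)(e^{i\theta}V)\bigr) = \alpha \widehat{T}$.

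For (ii) the same strategy applies: I claim $UTU^* = (UVU^*)(U|T|U^*)$ is the canonical polar decomposition of $UTU^*$. In the unitary case this is routine, since conjugation by a linear unitary preserves positivity and the partial-isometry structure, and $\mathcal{N}(UVU^*) = U\mathcal{N}(V) = U\mathcal{N}(T) = \mathcal{N}(UTU^*)$. In the anti-unitary case the same formula holds but requires four checks: (a)~$(UTU^*)^* = UT^*U^*$, which follows from $\langle Ux,y\rangle = \overline{\langle x, U^*y\rangle}$; (b)~$U|T|U^*$ is positive, since $\langle U|T|U^*x,x\rangle = \overline{\langle |T|U^*x, U^*x\rangle} \geq 0$; (c)~$(U|T|U^*)^2 = U|T|^2U^* = UT^*TU^* = (UTU^*)^*(UTU^*)$, identifying $U|T|U^*$ as the modulus of $UTU^*$; (d)~$UVU^*$ is a \emph{linear} partial isometry with the correct kernel, since the composition anti-linear--linear--anti-linear is linear. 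Once the polar decomposition is in hand, substitution into \eqref{mean} gives $\widehat{UTU^*} = U\widehat{T}U^*$.

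For (iii), the forward direction is immediate: the canonical polar decomposition of $I$ is $I = I \cdot I$, so $\widehat{I} = I$. For the converse, suppose $V|T| + |T|V = 2I$. By Proposition~\ref{th1}, $\mathcal{N}(T) = \mathcal{N}(\widehat{T}) = \{0\}$, so $V$ is an isometry. Multiplying on the left by $V^*$ produces $|T| + V^*|T|V = 2V^*$, whose left-hand side is self-adjoint; hence $V^* = V$. Being both self-adjoint and an isometry, $V$ satisfies $V^2 = V^*V = I$. Now multiply the original identity by $V$ on the left to get $|T| + V|T|V = 2V$. The left-hand side is the sum of two positive operators, so $V \geq 0$; combined with $V^2 = I$, the spectral theorem forces $V = I$. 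The equation then reduces to $2|T| = 2I$, so $|T| = I$ and therefore $T = V|T| = I$. The main subtlety across the proposition lies in the anti-unitary case of (ii), where one must carefully track positivity, the modulus, and the partial-isometry structure despite the loss of linearity of $U$ and $U^*$ individually; parts (i) and (iii) reduce to direct computation once the right polar decomposition (or the self-adjointness of $V$) has been identified.
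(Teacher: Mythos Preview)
Your proof is correct and follows essentially the same route as the paper's: for (i) and (ii) you identify the canonical polar decomposition of the transformed operator and substitute into \eqref{mean} (the paper does exactly this, merely saying ``in a similar fashion'' for the anti-unitary case where you give the details), and for (iii) both you and the paper multiply $\widehat{T}=I$ on the left by $V^*$ to see that $V^*=\tfrac12(|T|+V^*|T|V)$ is positive, combine this with $V$ being an isometry (via Proposition~\ref{th1}) to force $V=I$, and conclude $|T|=I$. One small slip in labeling: what you call the ``forward direction'' in (iii) is actually the easy implication $T=I\Rightarrow\widehat{T}=I$, i.e.\ $(\Leftarrow)$, while your ``converse'' is the substantive $(\Rightarrow)$ direction---the mathematics is right, only the names are swapped.
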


\begin{proof} \ (i) \ Straightforward from (\ref{mean}). 

(ii) \ Let $T=V|T|$ be the polar decomposition of $T$ and let $U : \mathcal{H} \to  \mathcal{H} $ be a unitary  operator. \ First note that
$$ \vert UTU^*\vert = U\vert T\vert U^*,$$
and we therefore have 
$$UTU^*=UV\vert T\vert U^*=(UVU^*)(U\vert T\vert U^*)=\tilde{V}\vert UTU^*\vert,$$
where $\tilde{V}=UVU^*$. \ Observe that $\tilde{V}$ is a partial isometry and $\mathcal{N}(UTU^*) = \mathcal{N}(\tilde{V})$; it follows that $\tilde{V}\vert UTU^*\vert$ is the polar decomposition of $UTU^*$. This implies that
\begin{align*}
\widehat{(UTU^*)} &= \frac{1}{2}(\tilde{V}|UTU^*|+|UTU^*|\tilde{V}) \\
 &=\frac{1}{2}( UV|T|U^*+ U|T|VU^*)\\
   & = U\widehat{T}U^*.
\end{align*}
When $U$ is anti-unitary, the result is obtained in a similar fashion.\\
(iii) \ The implication $(\Leftarrow)$ is obvious, so we focus on $(\Rightarrow$). \ Assume that $\widehat{T}=I$; hence  $V^*=V^*\widehat{T}=\frac{1}{2}(|T|+V^*|T|V)$ is a positive partial isometry. \ In particular, $V=V^*$ is an orthogonal projection. \ On the other hand, still using $\widehat{T}=I$, we can use Proposition \ref{th1}(i) and conclude that $T$ is one-to-one. Then  $V$ is an isometry, so $V^*V=V^2=I$. Therefore $V=I$ and $|T|=\widehat{T}=I$.  
\end{proof}

\begin{lemma}({\bf Heinz inequality}, cf. \cite{hein}) \ Let $A,B,X\in\B(\mathcal{H})$ such that $A$ and $B$ are positive operators. \ Then  
$$
\big\|A^{\frac{1}{2}}XB^{\frac{1}{2}}\big\| \leq \big\|\dfrac{AX+XB}{2}\big\|.
$$
\end{lemma}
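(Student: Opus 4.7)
My plan is to use a Sylvester-equation integral representation of $X$ in terms of $T := \frac{1}{2}(AX+XB)$, followed by a Cauchy--Schwarz estimate in $L^{2}(\mathbb{R}_{+})$. First, by replacing $A$ and $B$ with $A+\varepsilon I$ and $B+\varepsilon I$ and letting $\varepsilon\downarrow 0^{+}$, I reduce to the case in which $A,B$ are strictly positive. Both sides of the asserted inequality depend continuously on $(A,B)$ under this perturbation, and strict positivity is essential so that the semigroups $e^{-sA}, e^{-sB}$ decay and the improper integrals below converge.

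For strictly positive $A, B$, the equation $AX+XB=2T$ is solved explicitly by
$$X \;=\; 2\int_{0}^{\infty} e^{-sA}\, T\, e^{-sB}\, ds,$$
as one verifies by differentiating $e^{-sA}Te^{-sB}$ in $s$ and integrating. Multiplying on the left by $A^{1/2}$ and on the right by $B^{1/2}$ moves these factors inside the integral:
$$A^{1/2} X B^{1/2} \;=\; 2\int_{0}^{\infty} \bigl(A^{1/2} e^{-sA}\bigr)\, T\, \bigl(B^{1/2} e^{-sB}\bigr)\, ds.$$

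Given unit vectors $\xi,\eta\in\mathcal{H}$, I pair with $\xi,\eta$ and apply Cauchy--Schwarz to the resulting scalar integral in $s$. The decisive functional-calculus identity is
$$\int_{0}^{\infty}\|A^{1/2} e^{-sA}\xi\|^{2}\, ds \;=\; \Bigl\langle \int_{0}^{\infty} A e^{-2sA}\, ds\,\xi,\,\xi\Bigr\rangle \;=\; \tfrac{1}{2}\|\xi\|^{2},$$
valid because $\int_{0}^{\infty} Ae^{-2sA}\, ds = \tfrac{1}{2}I$ on the range of $A>0$; an analogous identity holds for $B$. Combining these estimates through Cauchy--Schwarz produces $|\langle A^{1/2} X B^{1/2}\eta,\xi\rangle|\leq 2\|T\|\cdot\tfrac{1}{\sqrt 2}\cdot\tfrac{1}{\sqrt 2}=\|T\|$, and taking the supremum over $\xi,\eta$ gives the desired bound.

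The main obstacle to overcome is finding a representation that produces $\bigl\|\tfrac{AX+XB}{2}\bigr\|$ (the norm of the \emph{sum}) rather than the weaker geometric mean $\|AX\|^{1/2}\|XB\|^{1/2}$ that one would obtain from a naive Hadamard three-lines argument applied to $z\mapsto A^{z}XB^{1-z}$. The Sylvester representation is exactly what achieves this, since $T$ scales uniformly inside the integral and its operator norm therefore controls the whole expression.
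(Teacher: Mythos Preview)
Your argument is correct. The Sylvester integral representation $X=2\int_{0}^{\infty}e^{-sA}Te^{-sB}\,ds$ (valid once $A,B\ge\varepsilon I$), the Cauchy--Schwarz step in $L^{2}(\mathbb{R}_{+})$, and the key identity $\int_{0}^{\infty}Ae^{-2sA}\,ds=\tfrac{1}{2}I$ all check out, and the $\varepsilon\downarrow 0$ reduction is justified since $(A+\varepsilon I)^{1/2}\to A^{1/2}$ in operator norm (uniform convergence of $t\mapsto\sqrt{t+\varepsilon}$ on $[0,\|A\|]$).

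There is nothing to compare against here: the paper does not supply its own proof of this lemma, but simply records it as the classical Heinz inequality with a citation to Heinz's 1951 paper. Your write-up therefore goes beyond what the paper does by giving a complete self-contained argument. It is worth noting that your integral-representation proof is essentially the standard modern route to this inequality (sometimes attributed to McIntosh or to the heat-semigroup proof of operator-monotone inequalities); Heinz's original argument proceeded differently, via complex interpolation on the strip, but the Sylvester approach you chose is arguably cleaner and, as you correctly observe, directly yields the arithmetic mean $\tfrac{1}{2}\|AX+XB\|$ rather than the weaker geometric bound.
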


\begin{corollary} \label{corT} Let $T\in\B(\mathcal{H})$. \ Then 
$$
\big\|\Delta(T)\big\|\leq \big\|\widehat{T}\big\|\leq \big\|T\big\|.
$$
In particular, \; $r(T)\leq \big\|\widehat{T}\big\|$.
\end{corollary}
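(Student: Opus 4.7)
The plan is to prove the two norm inequalities separately and then derive the spectral-radius bound as a consequence.

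First I would attack the leftmost inequality $\|\Delta(T)\|\le\|\widehat{T}\|$ by a direct application of Heinz's inequality. Taking $A=B=|T|$ (which are positive) and $X=V$ in the Heinz inequality just stated, the left-hand side becomes $\||T|^{1/2}V|T|^{1/2}\|=\|\Delta(T)\|$, while the right-hand side is exactly $\|\frac{|T|V+V|T|}{2}\|=\|\widehat T\|$. So this step is essentially a one-line invocation of the lemma.

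For the rightmost inequality $\|\widehat T\|\le\|T\|$, I would use the triangle inequality on the definition \eqref{mean} to write
\[
\|\widehat T\|\le \tfrac12\bigl(\|V|T|\|+\||T|V\|\bigr).
\]
Since $V|T|=T$, the first summand is $\|T\|$. For the second, $V$ is a partial isometry, so $\|V\|\le 1$ and therefore $\||T|V\|\le\||T|\|\,\|V\|\le\|T\|$ (alternatively, via $\||T|V\|^2=\|V^*|T|^2V\|\le\||T|^2\|=\|T\|^2$). Summing gives $\|\widehat T\|\le\|T\|$.

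Finally, to conclude $r(T)\le\|\widehat T\|$, I would appeal to the classical fact (due to Jung--Ko--Pearcy, \cite{jkp}) that the Aluthge transform preserves the spectrum, so in particular $r(\Delta(T))=r(T)$. Combined with the trivial bound $r(\Delta(T))\le\|\Delta(T)\|$ and the Heinz step above, this yields
\[
r(T)=r(\Delta(T))\le\|\Delta(T)\|\le\|\widehat T\|.
\]
No step here looks genuinely hard: the only delicate ingredient is Heinz's inequality itself, which is quoted, and the spectral invariance of $\Delta$, which is external. The main thing to be careful about is the justification $\||T|V\|\le\|T\|$, where one must not accidentally assume $V$ is unitary (it is only a partial isometry in general).
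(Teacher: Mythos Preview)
Your argument is correct and matches the paper's intended approach: the corollary is stated immediately after the Heinz inequality with no proof, so the left inequality is meant to be exactly the specialization $A=B=|T|$, $X=V$ that you give, and the right inequality is the elementary triangle-inequality estimate you wrote. The spectral-radius consequence via $r(T)=r(\Delta(T))\le\|\Delta(T)\|$ is also the natural route. One small bibliographic slip: the spectrum-preservation result for the Aluthge transform is due to Jung--Ko--Pearcy, which in this paper is \cite{kp3}, not \cite{jkp} (the latter is Jung--Ko--Park).
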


For partial isometries, we have the following result.

\begin{proposition} Let $V\in\B(\mathcal{H})$ be a partial isometry. \ Then
$$
\widehat{V}=\frac{1}{2}(I+V^*V)V.
$$
In particular, 
$$
\sigma(V)=\sigma(\widehat{V}).
$$ 
\end{proposition}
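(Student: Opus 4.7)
The plan is to first identify the canonical polar decomposition of a partial isometry, derive the formula, and then produce an explicit similarity establishing the spectral equality.

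For the formula, I would note that when $V$ is a partial isometry, $V^*V$ is an orthogonal projection, so $|V|=(V^*V)^{1/2}=V^*V$. The factorization $V=V\cdot V^*V$ is then the canonical polar decomposition (the partial isometry factor is $V$ itself, and $\mathcal{N}(V)=\mathcal{N}(V^*V)$ as required). Substituting into (\ref{mean}), the first summand simplifies via the defining identity $VV^*V=V$, giving $V|V|=V$, while the second summand is just $V^*V\cdot V$. Factoring $V$ on the right yields
\[
\widehat{V}=\frac{1}{2}\bigl(V+V^*V\cdot V\bigr)=\frac{1}{2}(I+V^*V)V.
\]

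For the spectral identity, rather than arguing directly with $\lambda-\widehat{V}$, I would exhibit a similarity between $V$ and $\widehat{V}$. Set $P:=V^*V$ and
\[
D:=2I-P,\qquad D^{-1}=\tfrac{1}{2}(I+P).
\]
Using $P^{2}=P$ one checks at once that $DD^{-1}=\frac{1}{2}(2I-P)(I+P)=\frac{1}{2}(2I+P-P^{2})=I$, so $D$ is invertible. Moreover, since $VP=VV^*V=V$, we obtain $VD=V(2I-P)=2V-VP=V$, hence
\[
D^{-1}VD=D^{-1}V=\tfrac{1}{2}(I+P)V=\widehat{V}.
\]
Thus $\widehat{V}$ is similar to $V$, and the spectral equality $\sigma(V)=\sigma(\widehat{V})$ follows immediately.

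I do not foresee a real obstacle: the only small trick is noticing that the factor $\frac{1}{2}(I+V^*V)$ appearing in the formula is itself the inverse of the positive operator $2I-V^*V$, which allows one to convert the seemingly one-sided multiplication into a genuine similarity transformation via the identity $VV^*V=V$.
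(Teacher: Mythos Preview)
Your derivation of the formula for $\widehat{V}$ is identical to the paper's. For the spectral equality, however, you take a different and in fact sharper route. The paper argues via the standard identity $\sigma(AB)\setminus\{0\}=\sigma(BA)\setminus\{0\}$: writing $\widehat{V}=\tfrac12(I+V^*V)V$ and using $VV^*V=V$ to get $\tfrac12 V(I+V^*V)=V$, one obtains $\sigma(\widehat{V})\setminus\{0\}=\sigma(V)\setminus\{0\}$, and then the point $0$ must be handled separately by checking that $V$ is invertible if and only if $\widehat{V}$ is. Your observation that $\tfrac12(I+V^*V)$ is precisely the inverse of $2I-V^*V$, combined with $V(2I-V^*V)=V$, upgrades this to an explicit similarity $\widehat{V}=D^{-1}VD$ with $D=2I-V^*V$. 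This is more elementary (no appeal to the $AB/BA$ spectral lemma), avoids the separate treatment of $0$, and actually proves more than the stated conclusion: similarity, not merely equality of spectra.
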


\begin{proof}
The modulus of $V$ is $|V|= V^*V$ and the polar decomposition of $V$ is $V=V(V^*V)$. Hence 
\begin{equation}
\widehat{V}=\frac{1}{2}(V+V^*VV)=\frac{1}{2}(I+V^*V)V. \label{eqnpi}
\end{equation}
Since $VV^*V=V$, it follows that 
\begin{equation}
\sigma(\widehat{V})\backslash\{0\}=\sigma(\frac{1}{2}(I+V^*V)V)\backslash\{0\}=\sigma(\frac{1}{2}V(I+V^*V))\backslash\{0\}=\sigma(V)\backslash\{0\}. \label{eqnpi2}
\end{equation}
Now observe that if $V$ is invertible then $V^*V=I$ and therefore $\widehat{V}=V$; it follows that $\widehat{V}$ is also invertible. \ Conversely, if $\widehat{V}$ is invertible then (\ref{eqnpi}) implies that $V$ is left invertible, that is, $V$ is an isometry. \ This means $V^*V=I$, and therefore $\widehat{V}=V$, and a fortiori $V$ is also invertible. \ This argument together with (\ref{eqnpi2}) establishes the equality of the spectra. 
\end{proof}

By Corollary \ref{corT}, $\|\widehat{T}\|\leq \|T\|$. \ As a consequence, the norm of the iterated mean transforms $( \|\widehat{T}^{(n)}\|)_{n\in\N}$ is a non-increasing sequence. \ Since it is bounded below by $0$, it converges; we denote the limit by $\ell(T)$. 

\begin{definition}
Let $T\in\B(\mathcal{H})$. \ The {\it mean limit} $\ell(T)$ is the limit in norm of the sequence of mean transform iterates; that is,  
$$\ell(T)=\lim_{n\to \infty} \|\widehat{T}^{(n)}\|=\inf_{n\in\N} \|\widehat{T}^{(n)}\|.$$
\end{definition}

\begin{remark} Let  $U\in\mathcal{U}(\mathcal{H})$ and $\alpha \in\C$. \ Then  
\begin{enumerate}[(i)]
\item $\ell(UTU^*)=\ell(T)$ and $\ell(\alpha T)=|\alpha |\ell(T)$. 
\item In the case when $T$ is quasinormal,  $\ell(T)=\|T\|=r(T)$. 
\item If $T^2=0$ then $\widehat{T}^{(n)}=\frac{1}{2^n} T$; as a consequence, $\ell(T)=0$.
\end{enumerate}
For the reader's convenience we provide a proof of (iii). \ Consider the canonical polar decomposition $T = V\left|T\right|$, and recall that $\overline{\mathcal{R}(T)}=\mathcal{R}(V)$. \ Since $T^2=0$ we must have $\mathcal{R}(T) \subseteq \mathcal{N}(T) = \mathcal{N}( \left|T\right|)$. \ It then follows that $\left|T\right|V=0$, which readily implies $\widehat{T}=\frac{1}{2}T$. \ The desired result is now clear. 
\end{remark}

It is now natural to formulate the following
\begin{problem}
For a general bounded linear operator $T \in \B(\mathcal{H})$, describe what $\ell(T)$ says about $T$.
\end{problem}

For $x,y \in \mathcal{H} \; (x \ne 0, y \ne 0)$, we denote by  $x\otimes y$ the  rank one operator defined by 
 $$
(x\otimes y)u := \langle u,y \rangle x \; (u\in \mathcal{H}).
$$  
The $\lambda$-Aluthge transform \cite{ams} of a rank one operator is given in \cite{chf} as follows: 
$$\Delta_\lambda (x\otimes y)= \dfrac{ \langle x,y \rangle}{\|y\|^2} y\otimes y.$$

In the following lemma,  we give the mean transform of this class of operators, and we show that the sequence of their mean iterates converges to the Aluthge transform. 
 
\begin{lemma}\label{l1} Let $x,y \in \mathcal{H}$ be two nonzero vectors, let $T:=x\otimes y$ be the rank one operator with range generated by $x$, and let $n\in\N$. \ Then the $n$-th iterate of $T$ is 
$$\widehat{T}^{(n)}=\frac{1}{2^n}\big(x+(2^n-1)\dfrac{ \langle x,y  \rangle}{\|y\|^2}y\big)\otimes y.$$
In particular, 
$ \widehat{T}^{(n)} \underset{n\to +\infty}{\longrightarrow} \Delta_\lambda(T)$ and $\ell(T)=| \langle x,y \rangle|= r(T).$
\end{lemma}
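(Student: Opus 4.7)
The plan is to compute everything explicitly in the rank-one picture and then observe that the recursion for the iterates decouples into a fixed component along $y$ and a component perpendicular to $y$ that halves at each step.

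First I would write down the canonical polar decomposition of $T=x\otimes y$ by brute force. Computing $T^{*}T=\|x\|^{2}(y\otimes y)$ and taking the positive square root yields
$$
|T|=\frac{\|x\|}{\|y\|}\,y\otimes y, \qquad V=\frac{1}{\|x\|\|y\|}\,x\otimes y.
$$
Then a direct calculation using the identities $(y\otimes y)(x\otimes y)=\langle x,y\rangle\, y\otimes y$ and $(x\otimes y)(y\otimes y)=\|y\|^{2}\,x\otimes y$ gives
$$
V|T|=x\otimes y, \qquad |T|V=\frac{\langle x,y\rangle}{\|y\|^{2}}\,y\otimes y,
$$
whence
$$
\widehat{T}=\frac{1}{2}\Bigl(x+\frac{\langle x,y\rangle}{\|y\|^{2}}y\Bigr)\otimes y.
$$
In particular $\widehat{T}$ is again a rank one operator of the form $z\otimes y$ (same second factor), so the iteration stays inside this one-parameter family and it suffices to track the first factor.

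Next I would set $\widehat{T}^{(n)}=z_{n}\otimes y$ with $z_{0}=x$, so that the above computation gives the recursion
$$
z_{n+1}=\frac{1}{2}z_{n}+\frac{\langle z_{n},y\rangle}{2\|y\|^{2}}\,y.
$$
Decomposing $z_{n}=a_{n}y+w_{n}$ with $w_{n}\perp y$, one sees that $a_{n+1}=a_{n}$ and $w_{n+1}=\tfrac{1}{2}w_{n}$. Hence $a_{n}\equiv a_{0}=\langle x,y\rangle/\|y\|^{2}$ and $w_{n}=2^{-n}w_{0}=2^{-n}(x-a_{0}y)$, so
$$
z_{n}=\frac{1}{2^{n}}x+\Bigl(1-\frac{1}{2^{n}}\Bigr)\frac{\langle x,y\rangle}{\|y\|^{2}}y
=\frac{1}{2^{n}}\Bigl(x+(2^{n}-1)\frac{\langle x,y\rangle}{\|y\|^{2}}y\Bigr),
$$
which is the displayed formula. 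Formally this is an induction on $n$, the base case being $n=0$ and the induction step being the recursion just established.

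For the final assertions, letting $n\to\infty$ gives $z_{n}\to \frac{\langle x,y\rangle}{\|y\|^{2}}y$ in norm, so $\widehat{T}^{(n)}\to\frac{\langle x,y\rangle}{\|y\|^{2}}\,y\otimes y=\Delta_{\lambda}(T)$. Using $\|z\otimes y\|=\|z\|\|y\|$ and taking the limit yields $\ell(T)=|\langle x,y\rangle|$, and since $Tx=\langle x,y\rangle x$ while $T$ vanishes on $y^{\perp}$ modulo the range, we have $r(T)=|\langle x,y\rangle|$ as well. There is no real obstacle here; the only place to be careful is the explicit form of $V$ (making sure $\mathcal{N}(V)=\mathcal{N}(T)$, which is automatic since $V$ itself has rank one with kernel $y^{\perp}$), and the bookkeeping that separates the $y$-parallel and $y$-perpendicular parts of $z_n$.
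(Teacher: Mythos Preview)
Your proposal is correct and follows essentially the same route as the paper: both compute the canonical polar decomposition of $x\otimes y$ explicitly, obtain $\widehat{T}=\tfrac12\bigl(x+\tfrac{\langle x,y\rangle}{\|y\|^{2}}y\bigr)\otimes y$, and then induct. The paper simply asserts that the induction goes through ``immediately,'' whereas you spell it out via the orthogonal decomposition $z_n=a_ny+w_n$, which is a nice way to see why the $y$-component is fixed and the transverse component halves; this is extra detail rather than a different argument.
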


\begin{proof}
We first exhibit the mean transform of a rank one operator. \ A simple calculation yields  
 $$|T|=\sqrt{T^*T}=\frac{\|x\|}{\|y\|}(y\otimes y).$$
Let 
$$
V:=\dfrac{1}{\|x\| \|y\|} x\otimes y.
$$
We then have $\mathcal{N}(V)=\mathcal{N}(T)$ and  $V^*V=\dfrac{1}{ \|y\|^2} y\otimes y$ is an orthogonal projection.
 Hence $V$ is a partial isometry and 
$$
V|T|=\Big(\dfrac{1}{\|x\| \|y\|} (x\otimes y)\Big) \Big(\frac{\|x\|}{\|y\|}(y\otimes y)\Big)=T.
$$
Therefore $T=V|T|$ is the canonical polar decomposition of $T$. \ It follows that 
\begin{eqnarray*}
\widehat{T}&=&\frac{1}{2}(V|T|+|T|V)
\\&=&\frac{1}{2}\big(x\otimes y+\dfrac{\|x\|}{\|y\|}(y\otimes y)  \dfrac{1}{\|x\| \|y\|} (x\otimes y)\big)
\\&=&\frac{1}{2}\big(x\otimes y+\dfrac{ \langle x,y \rangle}{\|y\|^2}(y\otimes y)\big)\\&=&
\frac{1}{2}(x+\dfrac{ \langle x,y \rangle}{\|y\|^2}y)\otimes y.
 \end{eqnarray*}
 Now, by induction on $n$ the equality 
$$
\widehat{T}^{(n)}=\frac{1}{2^n}\big(x+(2^n-1)\dfrac{ \langle x,y \rangle}{\|y\|^2}y\big)\otimes y
$$
holds immediately. \ Moreover,
$$
\widehat{T}^{(n)} \underset{n\to +\infty}{\longrightarrow} \dfrac{\langle x,y\rangle}{\|y\|^2} y\otimes y=\Delta(T).
$$
In particular, 
$$
\ell(T)=|\langle x,y \rangle|= r(T).
$$
\end{proof}

To study the mean iterates of a large class of Hilbert space operators we will first need the following result.

\begin{lemma}\label{l3}
Let $T\in \B(\mathcal{H})$, with canonical polar decomposition $T =  V|T|$. \ The  following assertions are equivalent.
\begin{enumerate}
\item $\mathcal{N}(T^*)\subseteq \mathcal{N}(T)$.
\item $VV^*|T|=|T|VV^*=|T|$.
\item $V^*$ is quasinormal (i.e., $VV^*V^*=V^*$).
\end{enumerate}
In this case, we have 
 $$
\widehat{T}=\frac{1}{2}(V|T|+|T|V)=\frac{1}{2}V(|T|+V^*|T|V).
$$
\end{lemma}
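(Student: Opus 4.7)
The plan is to translate all three conditions into a single operator inequality between the two natural projections associated with the polar decomposition, namely $P := V^{*}V$ (the projection onto $\overline{\mathcal{R}(T^{*})} = \mathcal{N}(T)^{\perp}$) and $Q := VV^{*}$ (the projection onto $\overline{\mathcal{R}(T)}$). The preliminary fact I will rely on throughout is that $\mathcal{N}(|T|) = \mathcal{N}(T)$, which gives $\overline{\mathcal{R}(|T|)} = P\mathcal{H}$ and consequently $P|T| = |T|P = |T|$.

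With these projections in hand, I would show that each of (1), (2), (3) is equivalent to the single inequality $P \leq Q$ (equivalently, $QP = P$). For (1): $\mathcal{N}(T^{*}) \subseteq \mathcal{N}(T)$ rewrites as $\ker Q \subseteq \ker P$, i.e., $P\mathcal{H} \subseteq Q\mathcal{H}$. For (3): the identity $VV^{*}V^{*} = V^{*}$ is precisely $QV^{*} = V^{*}$, and since $\mathcal{R}(V^{*}) = P\mathcal{H}$ (using the partial isometry identity $V^{*}VV^{*} = V^{*}$), this says exactly that $Q$ fixes $P\mathcal{H}$. For (2): $Q|T| = |T|$ says $Q$ is the identity on $\mathcal{R}(|T|)$, hence on its closure $P\mathcal{H}$; conversely, $P \leq Q$ yields $Q|T| = QP|T| = P|T| = |T|$, and $|T|Q = |T|$ follows by taking adjoints.

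The final displayed formula is then immediate from (2):
\[
V(|T|+V^{*}|T|V) = V|T| + VV^{*}|T|V = V|T| + Q|T|V = V|T| + |T|V = 2\widehat{T}.
\]
I do not foresee a genuine obstacle; the only conceptual step is recognising at the outset that the three apparently different conditions all collapse to the single projection inequality $V^{*}V \leq VV^{*}$, after which each implication reduces to a short manipulation using the partial isometry identity and the identification of $\overline{\mathcal{R}(|T|)}$ with $\mathcal{N}(T)^{\perp}$.
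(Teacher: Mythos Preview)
Your argument is correct. The underlying facts you use---the identifications $\mathcal{N}(V)=\mathcal{N}(T)=\mathcal{N}(|T|)$, $\mathcal{N}(V^{*})=\mathcal{N}(T^{*})$, $\overline{\mathcal{R}(|T|)}=\mathcal{N}(T)^{\perp}$, and the partial isometry identity $V^{*}VV^{*}=V^{*}$---are exactly those used in the paper, and the derivation of the displayed formula for $\widehat{T}$ is identical.

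The organisational difference is that the paper proves the cycle $(1)\Rightarrow(2)\Rightarrow(3)\Rightarrow(1)$ directly (e.g., from $\mathcal{N}(T^{*})\subseteq\mathcal{N}(T)$ it deduces $|T|(I-VV^{*})=0$, then from (2) it gets $\mathcal{N}(V^{*})\subseteq\mathcal{N}(V)$ hence $V(I-VV^{*})=0$, etc.), whereas you route everything through the single hub condition $P:=V^{*}V\leq VV^{*}=:Q$. Your packaging is slightly cleaner and makes the symmetry between the three statements more transparent, but mathematically the two proofs are the same: each implication in the paper is one of your ``condition $\Leftrightarrow P\leq Q$'' steps read in one direction.
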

\begin{proof} $(1)\Rightarrow (2)$ \  Suppose  that $(1)$ holds. Then
$$
  \mathcal{N}(VV^*) = \mathcal{N}(V^*) = \mathcal{N}(T^*)\subseteq  \mathcal{N}(T) = \mathcal{N}(\vert T\vert).
  $$
Hence, $\vert T\vert(I - VV^*) = 0$. Thus $VV^*|T|=|T|VV^*=|T|$.\\

\smallskip
$(2)\Rightarrow (3)$ \  Suppose  that $(2)$ holds. Then
$$
  \mathcal{N}(V^*) \subseteq \mathcal{N}(\vert T\vert) =  \mathcal{N}(V).
  $$
 It follows, $V(I - VV^*) = 0$. Hence $VVV^* = V$ and thus $V^* = VV^*V^*$.\\

\smallskip
$(3)\Rightarrow (2)$ \  Since $V^* = VV^*V^*$, we have $V = VVV^*$. Hence $ \mathcal{N}(V^*)\subseteq  \mathcal{N}(V)$ and 
 $$
  \mathcal{N}(T^*) = \mathcal{N}(V^*)\subseteq  \mathcal{N}(V)  =  \mathcal{N}(T).
   $$
   This completes the proof.
\end{proof}

We now state and prove one of our main results.

\begin{theorem}\label{th5}  Let $T\in\B(\mathcal{H})$  and suppose that  $\mathcal{N}(T^*)\subseteq\mathcal{N}(T)$. \ Let $T =  V|T|$ be the canonical polar decomposition of $T$, and let $n\in\N$. \ Then 
\begin{equation}\label{ei}
\widehat{T}^{(n)}=\frac{1}{2^n}V\Big(\overset{j=n}{\underset{j=0}{\sum}} (^n_j)(V^*)^j|T|V^j\Big).
\end{equation}
\end{theorem}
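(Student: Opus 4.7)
The plan is induction on $n$, with the inductive hypothesis strengthened so that it asserts not only formula (\ref{ei}), but also that $\widehat{T}^{(n)} = \frac{1}{2^n} V A_n$ is the \emph{canonical} polar decomposition of $\widehat{T}^{(n)}$ (using the same partial isometry $V$ as for $T$), and that $\widehat{T}^{(n)}$ still satisfies the hypothesis $\mathcal{N}((\widehat{T}^{(n)})^*) \subseteq \mathcal{N}(\widehat{T}^{(n)})$ of Lemma \ref{l3}. Here $A_n := \sum_{j=0}^n \binom{n}{j} (V^*)^j |T| V^j$, and the case $n=0$ is trivial.

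For the step $n \to n+1$, Lemma \ref{l3} applied to $\widehat{T}^{(n)}$ (legitimate by the strengthened hypothesis) yields
$$\widehat{T}^{(n+1)} \;=\; \tfrac{1}{2}\,V\!\left(\tfrac{1}{2^n}A_n + V^* \tfrac{1}{2^n} A_n V\right) \;=\; \tfrac{1}{2^{n+1}} V\bigl(A_n + V^* A_n V\bigr).$$
A direct reindexing of the shifted sum combined with Pascal's rule $\binom{n}{j} + \binom{n}{j-1} = \binom{n+1}{j}$ then gives $A_n + V^* A_n V = A_{n+1}$, establishing (\ref{ei}) at level $n+1$.

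Propagating the additional data splits into three verifications. Positivity of $A_{n+1}$ is evident from $(V^*)^j |T| V^j = \bigl(|T|^{1/2} V^j\bigr)^*\bigl(|T|^{1/2} V^j\bigr)$. Next, $A_{n+1}$ annihilates $\mathcal{N}(V) = \mathcal{N}(|T|)$: the $j \geq 1$ summands vanish on $\mathcal{N}(V)$ via the rightmost $V^j$, and the $j=0$ term is $|T|$ itself. This forces $A_{n+1} V^*V A_{n+1} = A_{n+1}^2$, so $\frac{1}{2^{n+1}} A_{n+1}$ is the modulus; since Proposition \ref{th1} iterated gives $\mathcal{N}(\widehat{T}^{(n+1)}) = \mathcal{N}(T) = \mathcal{N}(V)$, the decomposition is canonical. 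Finally, Lemma \ref{l3}(3) applied to $T$ yields $V V^* V^* = V^*$, which by an easy induction on $j$ extends to $V V^* (V^*)^j = (V^*)^j$ for every $j \geq 1$; combined with the hypothesis $V V^* |T| = |T|$, this gives $V V^* A_{n+1} = A_{n+1}$, so Lemma \ref{l3} confirms $\mathcal{N}((\widehat{T}^{(n+1)})^*) \subseteq \mathcal{N}(\widehat{T}^{(n+1)})$ and closes the induction.

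The main obstacle is conceptual rather than computational: one must recognize that the canonical polar decomposition of each iterate really reuses the same partial isometry $V$, so that Lemma \ref{l3} remains applicable at every stage --- the quasinormality of $V^*$ encoded in Lemma \ref{l3}(3) is exactly the closure property that allows the binomial expansion to persist through iteration.
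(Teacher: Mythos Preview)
Your proof is correct and follows essentially the same inductive route as the paper: both arguments identify $V\cdot\frac{1}{2^n}A_n$ as the canonical polar decomposition of $\widehat{T}^{(n)}$ and then compute the next iterate, collapsing the sum via Pascal's rule. You are more explicit than the paper in packaging the strengthened induction hypothesis (canonicity plus the kernel inclusion) and in invoking Lemma~\ref{l3} at each stage, whereas the paper verifies canonicity via $V^*VV^*=V^*$ and uses the quasinormality $VV^*(V^*)^j|T|=(V^*)^j|T|$ implicitly when factoring $V$ out of $|\widehat{T}^{(n)}|V$; but the substance is the same.
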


\begin{proof}
We will use induction on $n$. \ For $n=0$, the equality (\ref{ei}) holds immediately. \ Since
 $\mathcal{N}(T^*)\subseteq\mathcal{N}(T)$, we can use Lemma \ref{l3} to conclude that  
 $$
\widehat{T}=\frac{1}{2}(V|T|+|T|V)=\frac{1}{2}V(|T|+V^*|T|V).
$$
In particular, (\ref{ei}) holds also for $n=1$. 

We now assume that (\ref{ei}) holds for $n\in\N, \; n\ge1$. \ From Proposition \ref{th1} we have  
$$
\mathcal{N}(V)=\mathcal{N}(T)=\mathcal{N}(\widehat{T}^{(n)}).
$$
Since $V^*VV^*=V^*$, it follows that 
$$
|\widehat{T}^{(n)}|=\frac{1}{2^n}\Big(\overset{j=n}{\underset{j=0}{\sum}} (^n_j)(V^*)^j|T|V^j\Big).
$$
Hence $\widehat{T}^{(n)}=V|\widehat{T}^{(n)}|$ is the canonical polar decomposition of $\widehat{T}^{(n)}$. \ Thus
\begin{eqnarray*}
\widehat{T}^{(n+1)}&=& \dfrac{\widehat{T}^{(n)}+|\widehat{T}^{(n)}|V}{2}\\
&=&\frac{1 }{2^{n+1}} \Big(V\overset{j=n}{\underset{j=0}{\sum}} (^n_j)(V^*)^j|T|V^j+\Big(\overset{j=n}{\underset{j=0}{\sum}} 
(^n_j)(V^*)^j|T|V^j\Big)V\Big)\\
&=&\frac{1}{2^{n+1}}V\overset{j=n}{\underset{j=0}{\sum}} (^n_j)\Big((V^*)^j|T|V^j+(V^*)^{j+1}|T|V^{j+1}\Big )\\
&=&\frac{1}{2^{n+1}} V\overset{j=n+1}{\underset{j=0}{\sum}} (^{n+1}_j) (V^*)^j|T|V^j.
\end{eqnarray*}
Hence (\ref{ei}) holds for $n+1$. \ This completes the proof.
\end{proof}

\begin{corollary}Let $T\in\B(\mathcal{H})$ be such that $T$ and $T^*$ are one-to-one. \
Then $\|\widehat{T}^{(n)}\|=\|\widehat{T^*}^{(n)}\|$ for all $n\in\N$. \ In particular, $T$ and $T^*$ have the same mean limit
$$
\ell(T)=\ell(T^*).
$$ 
\end{corollary}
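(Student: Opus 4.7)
The plan is to reduce both $\widehat{T}^{(n)}$ and $\widehat{T^*}^{(n)}$ to the explicit formula of Theorem \ref{th5}, and then relate the two expressions by an explicit unitary conjugation.

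First I would observe that the simultaneous injectivity of $T$ and $T^*$ forces the partial isometry $V$ in the canonical polar decomposition $T=V\vert T\vert$ to be unitary: $\mathcal{N}(V)=\mathcal{N}(T)=\{0\}$ gives $V^*V=I$, while $\mathcal{R}(V)=\overline{\mathcal{R}(T)}=\mathcal{N}(T^*)^\perp=\mathcal{H}$ gives $VV^*=I$. A short calculation (using $(V\vert T\vert V^*)^2 = V\vert T\vert^2 V^* = TT^*$ and positivity) then identifies the canonical polar decomposition of $T^*$ as $T^*=V^*\vert T^*\vert$ with $\vert T^*\vert = V\vert T\vert V^*$. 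Since $\mathcal{N}(T)=\mathcal{N}(T^*)=\{0\}$, the hypothesis of Theorem \ref{th5} is trivially satisfied by both $T$ and $T^*$.

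Next I would apply formula (\ref{ei}) to each operator. For $T$ this gives
$$\widehat{T}^{(n)} = \frac{1}{2^n}\,V A, \qquad A := \sum_{j=0}^n \binom{n}{j}(V^*)^j\vert T\vert V^j,$$
while for $T^*$, after substituting $\vert T^*\vert = V\vert T\vert V^*$ into the formula and using $V^*V=VV^*=I$ to telescope the extra factors, the sum collapses into
$$\widehat{T^*}^{(n)} = \frac{1}{2^n}\,B V^*, \qquad B := \sum_{j=0}^n \binom{n}{j}V^j\vert T\vert (V^*)^j.$$
Since $V$ is unitary, the outer factors do not affect the operator norm, and hence $\|\widehat{T}^{(n)}\| = 2^{-n}\|A\|$ and $\|\widehat{T^*}^{(n)}\| = 2^{-n}\|B\|$.

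Finally I would exhibit the unitary equivalence $B = V^n A (V^*)^n$. Conjugation transforms each inner block via $V^n(V^*)^j = V^{n-j}$ and $V^j(V^*)^n = (V^*)^{n-j}$ (for $0\le j\le n$), so the summation index $j$ is reversed to $n-j$; the symmetry $\binom{n}{j}=\binom{n}{n-j}$ of the binomial coefficients then delivers precisely $B$. Since $V^n$ is unitary, $\|A\|=\|B\|$, so $\|\widehat{T}^{(n)}\|=\|\widehat{T^*}^{(n)}\|$, and letting $n\to\infty$ yields $\ell(T)=\ell(T^*)$. The main obstacle is the careful bookkeeping in the collapse of the sum for $\widehat{T^*}^{(n)}$: one must substitute $\vert T^*\vert = V\vert T\vert V^*$, cancel the $V^*V$ pairs correctly, and factor the single surviving $V^*$ cleanly onto the outside so that the remaining bracketed sum can be matched to $B$ via a unitary conjugation.
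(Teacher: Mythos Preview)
Your proof is correct and follows precisely the route the paper intends: the corollary is stated immediately after Theorem~\ref{th5} without proof, so the implicit argument is to apply formula~(\ref{ei}) to both $T$ and $T^*$ and compare. Your bookkeeping is accurate---in particular the collapse $\widehat{T^*}^{(n)}=2^{-n}BV^*$ and the unitary equivalence $B=V^nA(V^*)^n$ via the substitution $j\mapsto n-j$ are exactly right---so there is nothing to add.
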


\begin{theorem}Let $T\in\B(\mathcal{H})$ and suppose that $\mathcal{N}(T^*)\subseteq\mathcal{N}(T)$. \ If $T$ is a semi-hyponormal operator then $\widehat{T}$ is also semi-hyponormal. \ Moreover, the sequence of mean iterates converges in the strong operator topology to a normal operator $L\in\B(\mathcal{H})$, and we have 
$$
\ell(T)=\|L\|=\|\widehat{T}^{(n)}\|\;\;\text{ for all} \;\;n\in \N.
$$
\end{theorem}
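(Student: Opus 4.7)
The plan is to leverage the hypothesis $\mathcal{N}(T^{*})\subseteq\mathcal{N}(T)$ to put the partial isometry $V$ in a particularly rigid form, then push semi-hyponormality and the explicit formula of Theorem~\ref{th5} through the iteration, and finally extract the SOT limit from a monotonicity argument.

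First I would observe that semi-hyponormality $|T|\ge |T^{*}|$ forces $\mathcal{N}(T)\subseteq\mathcal{N}(T^{*})$ (since $|T|\ge |T^{*}|\ge 0$ yields $\mathcal{N}(|T|)\subseteq\mathcal{N}(|T^{*}|)$), which combined with the standing assumption gives $\mathcal{N}(T)=\mathcal{N}(T^{*})$. Therefore $V^{*}V=VV^{*}=:P$, i.e., $V$ is a \emph{normal} partial isometry. Using Lemma~\ref{l3} (so $VV^{*}|T|=|T|VV^{*}=|T|$ and $P|T|=|T|P=|T|$), a direct computation shows $|T^{*}|=V|T|V^{*}$, so semi-hyponormality reads $|T|\ge V|T|V^{*}$. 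Conjugating by $V^{*}$ and $V$ and using $V^{*}V|T|=|T|$, one obtains the companion inequality
\[
V^{*}|T|V\ge V^{*}V|T|V^{*}V=|T|.
\]

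Next I would show that the mean transform preserves this package of hypotheses. Using Lemma~\ref{l3}, $\widehat{T}=\tfrac12 V(|T|+V^{*}|T|V)$, and a routine computation (squaring and using $P|T|=|T|$, $PV^{*}|T|V=V^{*}|T|V$) gives
\[
|\widehat{T}|=\tfrac12(|T|+V^{*}|T|V),\qquad |\widehat{T}^{*}|=\tfrac12 V(|T|+V^{*}|T|V)V^{*},
\]
so $\widehat{T}=V|\widehat{T}|$ is the canonical polar decomposition (with the \emph{same} $V$) and $\mathcal{N}(\widehat{T}^{*})=\mathcal{N}(\widehat{T})=\mathcal{N}(T)$. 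The inequality $|\widehat{T}|\ge|\widehat{T}^{*}|$ reduces, after cancellation, to $V^{*}|T|V\ge V|T|V^{*}$, which is immediate from the two inequalities above. Hence $\widehat{T}$ is semi-hyponormal and the hypotheses transfer, so by induction every $\widehat{T}^{(n)}$ is semi-hyponormal with polar decomposition $V|\widehat{T}^{(n)}|$ and
\[
|\widehat{T}^{(n+1)}|=\tfrac12\bigl(|\widehat{T}^{(n)}|+V^{*}|\widehat{T}^{(n)}|V\bigr).
\]

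Writing $c_{n}:=|\widehat{T}^{(n)}|$, the semi-hyponormality of $\widehat{T}^{(n)}$ gives $V^{*}c_{n}V\ge c_{n}$, hence $c_{n+1}\ge c_{n}$. So $(c_{n})$ is a monotone, uniformly bounded ($\|c_{n}\|\le \|T\|$) sequence of positive operators, and converges in SOT to a positive $M$. Consequently $\widehat{T}^{(n)}=Vc_{n}\to VM=:L$ in SOT. Since $\|c_{n}\|=\|\widehat{T}^{(n)}\|$ is \emph{non-decreasing} by monotonicity and \emph{non-increasing} by Corollary~\ref{corT}, it is constant, forcing $\|\widehat{T}^{(n)}\|=\|T\|=\|M\|$ for every $n$ (the last equality since $c_{n}\nearrow M$ SOT for positive operators).

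For the normality of $L$, I would pass the identity $2c_{n+1}-c_{n}=V^{*}c_{n}V$ to the SOT limit to get $V^{*}MV=M$. Combining with $PM=MP=M$ (inherited from $Pc_{n}=c_{n}P=c_{n}$) gives $VMV^{*}=PMP=M$; multiplying by $V$ on the right and using $MP=M$ yields $VM=MV$. Therefore
\[
L^{*}L=MV^{*}VM=MPM=M^{2},\qquad LL^{*}=VM^{2}V^{*}=M(VMV^{*})=M^{2},
\]
so $L$ is normal, $\|L\|=\|M\|=\|\widehat{T}^{(n)}\|$, and $\ell(T)=\|L\|$. The main obstacle I anticipate is the bookkeeping in step two — correctly identifying $|T^{*}|=V|T|V^{*}$ and checking that the polar-decomposition data (same $V$, same null-space inclusion) is preserved under $\widehat{\;\cdot\;}$ so that induction and the recursion for $c_{n}$ are legitimate; once that is in place, the monotonicity and SOT convergence are soft.
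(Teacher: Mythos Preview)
Your proposal is correct and follows essentially the same route as the paper: identify $|\widehat{T}|=\tfrac12(|T|+V^{*}|T|V)$ with canonical polar decomposition $\widehat{T}=V|\widehat{T}|$, propagate semi-hyponormality and the kernel condition by induction, use the resulting monotonicity $|\widehat{T}^{(n+1)}|\ge|\widehat{T}^{(n)}|$ to obtain SOT convergence to a positive $M$ with $V^{*}MV=M$, deduce $VM=MV$ and hence normality of $L=VM$, and combine monotonicity with Corollary~\ref{corT} to see the norms are constant. Your additional remark that semi-hyponormality forces $\mathcal{N}(T)\subseteq\mathcal{N}(T^{*})$, so that $V$ is a \emph{normal} partial isometry with $V^{*}V=VV^{*}=P$, is not made explicit in the paper (which works only with $VV^{*}|T|=|T|$ from Lemma~\ref{l3}), but it is correct and streamlines the bookkeeping for the induction and for the normality of $L$.
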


\begin{proof}
Let $T = V|T|$ be the  polar decomposition of $T$. \  It is easy to get  that $\; |T^*|=V|T|V^*$. 
Suppose that $T$ is semi-hyponormal. Then 
$|T^*|=V|T|V^*\leq |T|$. \ Multiplying this inequality by $V^*$ on the left and by $V$ on the right,  we get that 
\begin{equation}\label{ei1}
 V|T|V^*\leq |T|\leq V^*|T|V.
\end{equation}
On the other hand,  since  $\mathcal{N}(T^*)\subseteq\mathcal{N}(T)$, Lemma \ref{l3} and a simple calculation  yield
$$
|\widehat{T}|=\frac{1}{2}(|T|+V^*|T|V).
$$
Hence, it follows from Lemma \ref{l3}  again that  $\widehat{T}=V|\widehat{T}|$ is the canonical polar decomposition of $\widehat{T}$, and 
\begin{eqnarray*}
|(\widehat{T})^*|&=&V|\widehat{T}|V^*
\\&=&\frac{1}{2}(V|T|V^*+VV^*|T|VV^*)
\\&=&\frac{1}{2}(V|T|V^*+|T|),\;\;\;\;(\text{since  $VV^*|T|=|T|$})
\\&\leq& \frac{1}{2}(|T|+V^*|T|V)=|\widehat{T}|\;\;\;(\text{see (\ref{ei1})}).
\end{eqnarray*}
This shows that $\widehat{T}$ is semi-hyponormal. 

\medskip

Since  $\mathcal{N}(T^*)\subseteq\mathcal{N}(T)$, we have  
$$
\mathcal{R}(\widehat{T})\subseteq \overline{\mathcal{R}(T)} \quad \text{and}    \quad      \mathcal{N}(\widehat{T}^*)\subseteq  \mathcal{N}(\widehat{T}).
$$
For the first inclusion, note that the condition on the kernels  implies that $\mathcal{R}(T^*)\subseteq \overline{\mathcal{R}(T)}$. \ It follows, by definition of $\widehat{T}$, that
$\mathcal{R}(\widehat{T})\subseteq \overline{\mathcal{R}(T)}$. The second inclusion is obtained as follows:
\begin{eqnarray*}
\widehat{T}^* x = 0 &\Rightarrow& T^*x + V^*|T|x = 0
\\&\Rightarrow&V|T|V^*x + VV^*|T|x = 0
\\&\Rightarrow&V|T|V^*x + |T|x = 0,\;\;\;\;(\text{since  $VV^*|T|=|T|$})
\\&\Rightarrow& x \in \mathcal{N}(|T|) = \mathcal{N}(T)=  \mathcal{N}(\widehat{T}).
\end{eqnarray*}

Now, by the induction we obtain, for all $n\in \N$,

$$\mathcal{R}(\widehat{T}^{(n)})\subseteq \overline{\mathcal{R}(T)},  \quad   \mathcal{N}((\widehat{T}^{(n)})^*)\subseteq  \mathcal{N}(\widehat{T}^{(n)}) \quad \text{and} \quad  \widehat{T}^{(n)}\; \; \text{is  semi-hyponormal}.$$ 

\ We also know that $\widehat{T}^{(n)}=V|\widehat{T}^{(n)}|$ is the canonical polar decomposition of $\widehat{T}^{(n)}$, with  
\begin{eqnarray}\label{eii}
|\widehat{T}^{(n+1)}|&=&\frac{1}{2}\big(|\widehat{T}^{(n)}|+V^*|\widehat{T}^{(n)}|V\big)
\geq |\widehat{T}^{(n)}|\;\;\;\big(\text{by (\ref{ei1})}\big).
\end{eqnarray}
In particular,  $(|\widehat{T}^{(n)}|)_{n\in\N}$ is an increasing sequence, so it converges in the strong operator topology to a positive operator $A\in\B(\mathcal{H})$ with $\mathcal{R}(A)\subseteq \overline{\mathcal{R}(T)}$. \ It follows that $VV^*A=A$. \ From (\ref{eii}), $A$ satisfies 
$A=\frac{1}{2}(A+V^*AV)$, and therefore $VA=AV$. \ It follows that 
$\widehat{T}^{(n)}=V|\widehat{T}^{(n)}|$ strongly converges to the normal operator $L:=VA=AV$. \ Again, from (\ref{eii}) we obtain $\|\widehat{T}^{(n+1)}\|=\|\widehat{T}^{(n)}\|=\|T\|=\|L\|$, for all $n\in\N$, as desired.
\end{proof}

\begin{remark} Under the assumption of Theorem \ref{th4}, if the sequence $\big((V^*)^k|T|V^k\big)_{k\in\N}$ converges in the strong (resp. weak) operator topology to an operator $A\in\B(\mathcal{H})$, then so does the sequence of operator mean iterates $(\widehat{T}^{(n)})_{n\in\N}$; the limit is the normal operator $L=VA=AV$.
\end{remark}

\section{Numerical range and numerical radius }

\begin{theorem}\label{th4} Let $T\in\B(\mathcal{H})$. \ Then
$$\overline{W(\widehat{T})}\subseteq \overline{W({T})},$$
where $ \overline{W({T})}$ denotes the closure of the
numerical range $W(T)$ of $T$. \ In particular, 
$$w(\widehat{T})\leq w(T).$$
\end{theorem}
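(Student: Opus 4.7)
The plan is to show the pointwise inclusion $W(\widehat T)\subseteq\overline{W(T)}$; taking closures and using that the numerical radius depends only on the closure then yields both statements. The starting observation is that since $\mathcal R(|T|)\subseteq\mathcal N(T)^{\perp}=\mathcal N(V)^{\perp}$ and $V^{*}V$ is the orthogonal projection onto $\mathcal N(T)^{\perp}$, one has $V^{*}V|T|=|T|$, equivalently $V^{*}T=|T|$. This identity is the engine that converts the awkward term $|T|V$ into something with $T$.

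Using that identity, for any unit vector $x\in\mathcal H$ I would rewrite
\[
\langle\widehat T x,x\rangle=\tfrac12\langle V|T|x,x\rangle+\tfrac12\langle|T|Vx,x\rangle
=\tfrac12\langle Tx,x\rangle+\tfrac12\langle V^{*}TVx,x\rangle
=\tfrac12\langle Tx,x\rangle+\tfrac12\langle TVx,Vx\rangle.
\]
The first summand lies in $W(T)$ by construction. The goal is then to place the second summand in $\overline{W(T)}$ and finish by invoking the Toeplitz--Hausdorff convexity of $W(T)$.

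For the second summand I would split into three cases according to $t:=\|Vx\|\in[0,1]$. If $t=1$, then $Vx$ is a unit vector and $\langle TVx,Vx\rangle\in W(T)$ directly. If $0<t<1$, I write $\langle TVx,Vx\rangle=t^{2}\mu$ with $\mu:=\langle T(Vx/t),Vx/t\rangle\in W(T)$, and express $t^{2}\mu=t^{2}\mu+(1-t^{2})\cdot 0$ as a convex combination. This requires knowing $0\in W(T)$, which is the only delicate point: but $t<1=\|x\|$ forces $x$ to have a nontrivial component in $\mathcal N(V)=\mathcal N(T)$, so there is a unit vector in $\mathcal N(T)$, and hence $0\in W(T)$. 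The case $t=0$ is handled the same way (with $\langle TVx,Vx\rangle=0$ and $x\in\mathcal N(T)$). In every case the second summand belongs to $\overline{W(T)}$.

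Having placed both summands in the convex set $\overline{W(T)}$, the average $\langle\widehat T x,x\rangle$ lies in $\overline{W(T)}$, giving $W(\widehat T)\subseteq\overline{W(T)}$ and so $\overline{W(\widehat T)}\subseteq\overline{W(T)}$. The numerical radius inequality is immediate since $w(\cdot)$ equals the supremum of $|\lambda|$ over the closure of the numerical range. The main conceptual obstacle is precisely the middle case $0<\|Vx\|<1$, where the presence of a kernel component of $x$ shortens $Vx$; the observation that this same kernel component forces $0\in W(T)$ is what makes the convex combination trick succeed.
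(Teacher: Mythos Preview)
Your proof is correct and in fact establishes the slightly stronger inclusion $W(\widehat{T})\subseteq W(T)$ (without closures), since in each of your three cases the second summand already lies in $W(T)$ itself, and the Toeplitz--Hausdorff theorem lets you average inside $W(T)$.

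The paper's argument is quite different: it uses the disk characterization
\[
\overline{W(T)}=\bigcap_{\lambda\in\C}\bigl\{\mu:|\mu-\lambda|\le\|T-\lambda I\|\bigr\}
\]
together with a lemma of Foia\c{s}--Jung--Ko--Pearcy asserting $\||T|V-\lambda I\|\le\|T-\lambda I\|$ for all $\lambda$, and then simply averages to get $\|\widehat{T}-\lambda I\|\le\|T-\lambda I\|$. Your route is more elementary and self-contained: it avoids the external lemma and the disk description entirely, replacing them with the identity $|T|V=V^{*}TV$ and a direct convexity argument. The paper's approach has the advantage of being a one-line reduction once the cited lemma is in hand, and it generalizes immediately to any transform of the form $\tfrac12(T+S)$ with $\|S-\lambda I\|\le\|T-\lambda I\|$; your approach gives the sharper conclusion $W(\widehat{T})\subseteq W(T)$ and makes transparent exactly where the kernel of $T$ enters (it supplies the point $0$ needed for the convex combination when $\|Vx\|<1$).
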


\begin{proof}
Recall first the well known formula for the numerical range, (see \cite[Theorem 4 and Corollary]{sw})
$$
\overline{W(T)}=\bigcap_{\lambda \in \C}\Big\{~~\mu\; : \; |\mu-\lambda|\leq \|T-\lambda I\|~~\Big\}.
$$
Let $T=V|T|$ be the canonical polar decomposition of $T$. \ From \cite[Lemma 2.3]{fjkp} we have 
$$ \|\; |T|V-\lambda I \, \|\leq \|T-\lambda I  \|,\;\; (\text{all }\;\; \lambda \in \C).
$$ 
Therefore,  
\begin{eqnarray*}
\overline{W(\widehat{T})}&=&\bigcap_{\lambda \in \C}\Big\{~~\mu\; : \; |\mu-\lambda|\leq \|\widehat{T}-\lambda I\|~~\Big\}
\\&=&\bigcap_{\lambda \in \C}\Big\{~~\mu\; : \; |\mu-\lambda|\leq \frac{1}{2}\|T-\lambda I+|T|V-\lambda I\|~~\Big\}
\\&\subseteq & \bigcap_{\lambda \in \C}\Big\{~~\mu\; : \; |\mu-\lambda|\leq \frac{1}{2}(\|T-\lambda I\|+\||T|V-\lambda I\|)~~\Big\}
\\&\subseteq & \bigcap_{\lambda \in \C}\Big\{~~\mu\; : \; |\mu-\lambda|\leq \|T-\lambda I\|~~\Big\}
=\overline{W(T)}.
\end{eqnarray*}
\end{proof}

\begin{lemma}\label{lemmsy}(Cf. \cite{msy}) \ Let $A,B, X\in\B(\mathcal{H})$ such that $A,B$ are positive. Then 
$$w\big(A^{\frac{1}{2}}X B^{\frac{1}{2}}\big)\leq w\Big(\dfrac{A^\alpha XB^{1-\alpha}+A^{1-\alpha} XB^{\alpha}}{2}\Big).$$
for all $0\leq \alpha\leq 1$.
\end{lemma}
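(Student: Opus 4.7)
My plan is to prove this Heinz-type numerical radius inequality via a Hadamard three-lines argument on an appropriate operator-valued analytic function, following the standard template for such inequalities.

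First I would make a routine reduction: replacing $A$ by $A+\varepsilon I$ and $B$ by $B+\varepsilon I$ and letting $\varepsilon\to 0^+$ at the end (the numerical radius is continuous in the operator-norm topology), I may assume $A$ and $B$ are positive and invertible, so that the complex powers $A^z$ and $B^z$ are well-defined entire operator-valued functions of $z\in\mathbb{C}$. I then define
\[
F(z):=\frac{A^{z} X B^{1-z} + A^{1-z} X B^{z}}{2},
\]
which is entire and uniformly bounded on the closed strip $S:=\{z:0\leq\mathrm{Re}(z)\leq1\}$. Two structural observations are crucial: the symmetry $F(z)=F(1-z)$, and the fact that $F(1/2)=A^{1/2}XB^{1/2}$ is precisely the operator whose numerical radius we want to estimate from above.

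Next, for a fixed unit vector $\xi\in\mathcal{H}$ and a fixed unimodular scalar $\mu\in\mathbb{C}$, I consider the scalar analytic function $\phi(z):=\mu\langle F(z)\xi,\xi\rangle$, bounded on $S$. Without loss of generality assume $0\leq\alpha\leq1/2$; applying Hadamard's three-lines theorem on the substrip $\alpha\leq\mathrm{Re}(z)\leq1-\alpha$, whose midpoint is $z=1/2$, and using the symmetry $F(\alpha+it)$ versus $F(1-\alpha-it)$ inherited from $F(z)=F(1-z)$, I obtain
\[
|\phi(1/2)| \leq \Bigl(\sup_{t\in\mathbb{R}}|\phi(\alpha+it)|\Bigr)^{1/2}\Bigl(\sup_{t\in\mathbb{R}}|\phi(1-\alpha+it)|\Bigr)^{1/2} = \sup_{t\in\mathbb{R}}|\phi(\alpha+it)|.
\]
Once I establish the key boundary estimate $|\langle F(\alpha+it)\xi,\xi\rangle|\leq w(F(\alpha))$ for every $t\in\mathbb{R}$ and every unit vector $\xi$, taking the supremum over $\xi$ and the unimodular $\mu$ on the left yields $w(F(1/2))\leq w(F(\alpha))$, which is the claimed inequality.

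The main obstacle is the boundary estimate. Writing
\[
F(\alpha+it) = \tfrac{1}{2}\bigl(A^{it}(A^\alpha X B^{1-\alpha})B^{-it} + A^{-it}(A^{1-\alpha}XB^\alpha)B^{it}\bigr)
\]
and exploiting that $A^{it}$ and $B^{it}$ are unitary, the two summands in $\langle F(\alpha+it)\xi,\xi\rangle$ rearrange into matrix coefficients of the factors of $F(\alpha)$ against the unit vectors $B^{-it}\xi$, $A^{-it}\xi$ (and their counterparts with opposite sign of $t$). Since the numerical radius is \emph{not} preserved under two-sided conjugation by distinct unitaries, one cannot trivially bound each term by $w(F(\alpha))$; instead the symmetry between the two summands and a careful inner product manipulation are needed to produce a diagonal form. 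This is the technical heart of the argument, worked out in detail in \cite{msy}, and for the purposes of the present paper the lemma is simply invoked as a known tool.
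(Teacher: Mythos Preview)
The paper does not prove this lemma at all; it is stated with the citation ``Cf.\ \cite{msy}'' and used as a black box. Your closing sentence recognizes this correctly, so in that sense your proposal matches the paper's treatment exactly: both defer to \cite{msy}.

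As for the sketch you offer before that concession, the three-lines framework and the symmetry $F(z)=F(1-z)$ are set up correctly, but the boundary estimate you flag as ``the main obstacle'' is a genuine gap, not a routine detail. For the norm version (Heinz's inequality) the boundary bound is immediate because $\|A^{it}YB^{-it}\|=\|Y\|$; for the numerical radius it fails in that form, since $w(\cdot)$ is not invariant under two-sided conjugation by distinct unitaries and $\langle F(\alpha+it)\xi,\xi\rangle$ is an \emph{off-diagonal} matrix coefficient of the constituents of $F(\alpha)$. The actual argument in \cite{msy} does not salvage this particular boundary estimate; rather, it passes through the identity $w(T)=\sup_{\theta\in\mathbb{R}}\|\mathrm{Re}(e^{i\theta}T)\|$, which converts the problem into an operator-\emph{norm} Heinz inequality for the self-adjoint operators $\mathrm{Re}(e^{i\theta}F(\alpha))$, where the unitary-conjugation obstruction disappears. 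So your outline is in the right spirit but would need that change of viewpoint to close; since the paper itself only cites the result, this does not affect the comparison.
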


As a direct consequence of Theorem \ref{th4} and Lemma \ref{lemmsy} we get the following result.
 
\begin{corollary}Let $T\in \B(\mathcal{H})$ be an arbitrary operator, and recall that $\Delta(T)$ denotes the Aluthge transform of $T$. \ Then 
$$w(\Delta(T))\leq w(\widehat{T})\leq w(T).$$
\end{corollary}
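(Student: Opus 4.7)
The corollary follows by combining two ingredients already in hand, so the plan is short. The second inequality $w(\widehat{T})\le w(T)$ is an immediate restatement of the conclusion of Theorem \ref{th4}, so I would dispose of it in one line.

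For the first inequality $w(\Delta(T))\le w(\widehat{T})$, my plan is to invoke Lemma \ref{lemmsy} with the specific choices $A=B=|T|$ (both positive, as required) and $X=V$, where $T=V|T|$ is the canonical polar decomposition. With these choices, $A^{1/2}XB^{1/2}=|T|^{1/2}V|T|^{1/2}=\Delta(T)$. Taking $\alpha=0$ (equivalently $\alpha=1$, by the symmetry of the right-hand side in $\alpha\leftrightarrow 1-\alpha$) collapses the convex combination to
$$
\frac{A^{0}XB^{1}+A^{1}XB^{0}}{2}=\frac{V|T|+|T|V}{2}=\widehat{T},
$$
and Lemma \ref{lemmsy} yields $w(\Delta(T))\le w(\widehat{T})$ at once.

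Chaining the two bounds completes the proof. There is essentially no obstacle here: the only thing to verify is that the hypotheses of Lemma \ref{lemmsy} are met (which they are, since $|T|\ge 0$), and that $\alpha=0$ is permitted by the range $0\le\alpha\le 1$ of the lemma. No further estimates, inductions, or structural hypotheses on $T$ are needed.
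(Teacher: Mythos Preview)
Your proposal is correct and matches the paper's intended argument exactly: the authors state the corollary as ``a direct consequence of Theorem \ref{th4} and Lemma \ref{lemmsy}'' without further detail, and your specialization $A=B=|T|$, $X=V$, $\alpha=0$ in Lemma \ref{lemmsy} is precisely the intended application. There is nothing to add.
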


\section{The mean limit for unilateral weighted shifts}

Let $\ell^2(\N)$ be the  Hilbert space of complex square-summable sequences $x=(x_i)_{i\in\N}$, with the norm $\|x\|:=(\sum_{i\in\N}|x_i|^2)^{\frac{1}{2}}$. \ Given any bounded sequence of strictly positive numbers $\alpha\equiv(\alpha_i)_{i\in\N}$, the
associated unilateral weighted shift $W_\alpha\equiv \shift  (\alpha_0,\alpha_1...): \ell^2(\N)\longrightarrow \ell^2(\N)$ is defined by 
$$
W_\alpha (x_0,x_1,...):=(0,\alpha_0 x_0,\alpha_1 x_1,...),
$$
where $x=(x_i)_{i\in\N}\in \ell^2(\N)$. \ When $\alpha_i=1$ for all $i$, $W_\alpha=W$ is simply the standard  (unweighted) unilateral shift on $\ell^2(\N)$.
 
Clearly, $W_\alpha$ is a bounded linear operator on $\ell^2(\N)$, with operator norm $\|W_\alpha\|=\underset{i\in\N}{\sup}~~ \alpha_i$. \ The spectral radius of $W_\alpha$ is well known (see, for example, \cite[Problem 91]{Halmos}): 
\begin{equation}\label{eqq}
r(W_\alpha)=\underset{n\rightarrow \infty}{\lim}\Big(\underset{k\in\N}{\sup}(\alpha_k...\alpha_{k+n-1})\Big)^{\frac{1}{n}}.
\end{equation}
The spectrum of $W_\alpha$ is given in \cite[p.66, Theorem 4]{Shields} by 
$$
\sigma(W_\alpha)=\{ \lambda \in\C : |\lambda|\leq r(W_\alpha)\}.
$$
The Aluthge transform $\Delta(W_\alpha)$ of $W_\alpha$ is also a unilateral weighted shift:
$$
\Delta(W_\alpha)=W_{\alpha^{\Delta}}=\shift  \Big (\sqrt{\alpha_0\alpha_{1}}, \sqrt{\alpha_1\alpha_{2}}, ... \Big).
$$
By induction, the iterates of the Aluthge transform are given in \cite{kp1} by 
 $$\Delta^{(n)}(W_\alpha)=W_{\alpha^{\Delta^{(n)}}},$$
 where 
 $$\alpha^{\Delta^{(n)}}=\Big\{\alpha^{\Delta^{(n)}}_i\Big\}_{i\in\N}=
 \Big\{\Big(\prod_{j=0}^{n}\alpha_{i+j} ^{(_j ^n)} \Big)^{\frac{1}{2^n}}\Big \}_{i\in \N}$$
 and  $(_j ^n)=\dfrac{n!}{j!(n-j)!}$.
 
  As explained in \cite{hyy}, the mean transform $\widehat{W_\alpha}$ of $W_\alpha$ is 
 $$\widehat{W_\alpha}=W_{\widehat{\alpha}}=\shift  \Big (\frac{\alpha_0+\alpha_1}{2},\frac{\alpha_1+\alpha_2}{2},...\Big).$$
 The mean iterates of the weighted shift $\widehat{W_\alpha}^{(n)}$ are also weighted shifts with weight sequences 
\begin{equation}\label{eqm}
{\alpha}^{(n)}=\big\{\widehat{\alpha}_i^{(n)}\big\}_ {i\in\N } =\Big\{\dfrac{\sum_{i=0}^n(^n_j)\alpha_{i+j}}{2^n}\Big\}_{i\in\N}.
\end{equation}
 
 We remark that,  for a sequence of strictly positive numbers  $\alpha=(\alpha_i)_{i\in\N}$, we have the following relation between the iterates of Aluthge and mean transforms, $$ W_{\exp(\widehat{\alpha}^{(n)})}=\Delta^{(n)}\big(W_{\exp(\alpha)}\big).$$
 where,  $\exp(\beta)=\big(\exp(\beta_i)\big)_{i\in\N}$ for any sequence $\beta=(\beta_i)_{i\in\N}$.

In contrast to what happens with the iterates of the Aluthge transform, the spectrum of the mean transform is not the same as the spectrum of the original operator. \ Moreover, in general the sequence of norms of the mean iterates does not converge to the spectral radius, as shown by the following example. 
 \begin{example}\label{ex1}
 Let $W_\alpha$ be the unilateral weighted shift defined by $\alpha \equiv (\alpha_i)_{i\in\N}$, where  $\alpha_i:=2+(-1)^i$. \ As proven in \cite{hyy}, $\widehat{W_\alpha}=2U_+$ (the unweighted unilateral shift), and is therefore quasi-normal. \ However, $W_\alpha$ is not quasinormal. \ This proves that the inverse mean transform does not preserve the set of quasinormal operators.

On the other hand, by the formula for the spectral radius of a unilateral weighted shift $W_\alpha$ (given in \cite[Problem 91]{Halmos}), we get the following
\begin{eqnarray*}
r(W_\alpha)&=&\underset{n\rightarrow \infty}{\lim}\Big(\underset{k\in\N}{\sup}(\alpha_k...\alpha_{k+n-1})\Big)^{\frac{1}{n}}
\\&=&\underset{n\rightarrow \infty}{\lim}\Big(\underset{k\in\N}{\sup}\underset{k\leq j\leq k+n-1}{\prod}(2+(-1)^j)\Big)^{\frac{1}{n}}
\\&=&\underset{n\rightarrow \infty}{\lim}\Big(\underset{k\in\N}{\sup}\underset{\underset{\text{and}\;j\;\;\text{ peer}\;\;}{k\leq j\leq k+n-1}}{\prod}3 \Big)^{\frac{1}{n}}
\\&=& \underset{n\rightarrow \infty}{\lim} (3^{\frac{n}{2}+\delta})^{\frac{1}{n}}, \;\;(\text{ where $\delta \in \{-1,0,1\}$})
\\&=&\underset{n\rightarrow \infty}{\lim} 3^{\frac{1}{2}+\frac{\delta}{n}}=\sqrt{3}.
\end{eqnarray*}
 Hence, from \cite[Section 4]{Shields}, we conclude that
$$
\sigma(W_\alpha)=\{\lambda \in\C  :  |\lambda|\leq \sqrt{3}\}\ne \sigma(\widehat{W_\alpha})=2\sigma(U_+)=\{\lambda \in \C  :  |\lambda|\leq 2\}.
$$
On the other hand, the mean iterates of $W_\alpha$ are 
 $$\widehat{W_\alpha}^{(n)}=2U_+\;\; \text{for all } \;\;n\in\N.$$
 Therefore $\ell(W_\alpha)=2\ne r(W_\alpha)$. \ Thus, in general the sequence of operator mean iterates does not converge to the spectral radius. \ This is in sharp contrast to what happens for the Aluthge transform (see \cite{yam}).  \qed
\end{example}
 
 \begin{theorem}
 Let $\alpha=(\alpha_i)_{i\in\N}$ be a sequence of strictly positive numbers,  and let  $W_\alpha$ be the associate weighted shift. \ Then 
 $$r(W_\alpha) \leq r(\widehat{W_\alpha}).$$
 \end{theorem}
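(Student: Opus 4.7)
The plan is to factor the comparison through the Aluthge transform, using the fact that the mean transform of a weighted shift computes arithmetic means of consecutive weights while the Aluthge transform computes geometric means. The inequality will then fall out of AM--GM together with the known invariance of the spectral radius under the Aluthge transform.

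First I would record the weight sequences in play: by the formulas recalled just before the theorem, the weights of $\widehat{W_\alpha}$ are $\widehat{\alpha}_k=\tfrac{\alpha_k+\alpha_{k+1}}{2}$, while the weights of $\Delta(W_\alpha)=W_{\alpha^{\Delta}}$ are $\alpha^{\Delta}_k=\sqrt{\alpha_k\alpha_{k+1}}$. The AM--GM inequality applied to each pair $(\alpha_k,\alpha_{k+1})$ gives
$$\widehat{\alpha}_k \;\ge\; \alpha^{\Delta}_k \quad \text{for every } k\in\N.$$

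Second, I would observe that the spectral radius formula \eqref{eqq} is monotone in the weight sequence: if $\beta_i\le\gamma_i$ for all $i\in\N$, then $\prod_{j=0}^{n-1}\beta_{k+j}\le\prod_{j=0}^{n-1}\gamma_{k+j}$, and taking $\sup_k$ followed by the $n$-th root and the limit in $n$ yields $r(W_\beta)\le r(W_\gamma)$. Applied to the weights $\alpha^{\Delta}$ and $\widehat{\alpha}$, this gives
$$r(\Delta(W_\alpha)) \;=\; r(W_{\alpha^{\Delta}}) \;\le\; r(W_{\widehat{\alpha}}) \;=\; r(\widehat{W_\alpha}).$$

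Finally, I would invoke the well-known fact (Jung--Ko--Pearcy) that the Aluthge transform preserves the spectrum, and in particular $r(\Delta(T))=r(T)$ for every $T\in\B(\mathcal{H})$. Chaining this with the previous display gives $r(W_\alpha)=r(\Delta(W_\alpha))\le r(\widehat{W_\alpha})$, which is the desired inequality.

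I do not expect a serious obstacle here, since all three ingredients (the explicit weight formulas, monotonicity of \eqref{eqq}, and spectral invariance of $\Delta$) are either stated in the paper or standard in the literature; the only conceptual point is the \emph{pivot} to the Aluthge transform, which turns the awkward AM--GM comparison of weights into an equality of spectral radii.
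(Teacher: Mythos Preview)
Your argument is correct and mirrors the paper's proof almost exactly: both start from $r(W_\alpha)=r(\Delta(W_\alpha))$, compare the Aluthge weights $\sqrt{\alpha_k\alpha_{k+1}}$ to the mean-transform weights $\tfrac{\alpha_k+\alpha_{k+1}}{2}$ via AM--GM, and then apply the spectral-radius formula for weighted shifts. The only cosmetic difference is that you isolate the monotonicity of $r(W_\beta)$ in the weights as a separate lemma, while the paper carries out that comparison inline inside the limit.
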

 \begin{proof}
From the spectral radius formula we obtain
\begin{eqnarray*}
r(W_\alpha)&=&r(\Delta(W_\alpha))
\\&=& \underset{n\rightarrow\infty}{ \lim}\Big(\underset{i\in\N}{\sup} 
 \prod_{j=0}^{n-1}\alpha^{\Delta}_{i+j}  \Big)^{\frac{1}{n}}
 \\&=&  \underset{n\rightarrow\infty}{ \lim}\Big(\underset{i\in\N}{\sup} 
 \prod_{j=0}^{n-1}\sqrt{\alpha_{i+j}\alpha_{i+j+1}} \Big)^{\frac{1}{n}}
 \\&\leq &  \underset{n\rightarrow\infty}{ \lim}\Big(\underset{i\in\N}{\sup} 
 \prod_{j=0}^{n-1}\frac{1}{2}(\alpha_{i+j}+\alpha_{i+j+1}) \Big)^{\frac{1}{n}}\;\;(\sqrt{ab}\leq \frac{1}{2}(a+b) )
 \\&=& \underset{n\rightarrow\infty}{ \lim}\Big(\underset{i\in\N}{\sup} 
 \prod_{j=0}^{n-1}\widehat{\alpha_{i+j}}  \Big)^{\frac{1}{n}}
 \\&=& r(\widehat{W_\alpha}).
\end{eqnarray*}
 \end{proof}
 As a direct consequence, we have the following result.
 \begin{corollary}For a unilateral weighted shift $W_\alpha$, we have 
 $$\sigma(W_\alpha)\subseteq \sigma(\widehat{W_\alpha}).$$
 \end{corollary}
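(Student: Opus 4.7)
The proof is essentially a one-step consequence of the preceding theorem combined with the structure theorem for the spectrum of a unilateral weighted shift. My plan is as follows.

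First, I would observe that both $W_\alpha$ and $\widehat{W_\alpha}$ are unilateral weighted shifts with strictly positive weights. For $W_\alpha$ this is given, and for $\widehat{W_\alpha}$ it is immediate from the explicit formula
$$\widehat{W_\alpha}=W_{\widehat{\alpha}}=\shift\Big(\tfrac{\alpha_0+\alpha_1}{2},\tfrac{\alpha_1+\alpha_2}{2},\ldots\Big),$$
whose weights are strictly positive because $\alpha_i>0$ for every $i\in\N$.

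Next, I would invoke the spectral description of unilateral weighted shifts quoted earlier in the paper from \cite[p.66, Theorem 4]{Shields}: for any such shift with strictly positive weights, the spectrum is the closed disk of radius equal to the spectral radius, centered at the origin. Applying this to both $W_\alpha$ and $\widehat{W_\alpha}$ gives
$$\sigma(W_\alpha)=\{\lambda\in\C:|\lambda|\leq r(W_\alpha)\},\qquad \sigma(\widehat{W_\alpha})=\{\lambda\in\C:|\lambda|\leq r(\widehat{W_\alpha})\}.$$

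Finally, the theorem just proved yields $r(W_\alpha)\leq r(\widehat{W_\alpha})$, so the first closed disk is contained in the second, giving $\sigma(W_\alpha)\subseteq\sigma(\widehat{W_\alpha})$. There is no real obstacle here; the only thing to be slightly careful about is that the Shields result applies to \emph{both} shifts, which requires the positivity of the averaged weights $\widehat{\alpha}_i$, and this is automatic from $\alpha_i>0$.
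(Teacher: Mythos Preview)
Your argument is correct and is exactly what the paper intends: it states the corollary as ``a direct consequence'' of the preceding theorem, relying (as you do) on Shields' description of the spectrum of a unilateral weighted shift as the closed disk of radius $r(W_\alpha)$, applied to both $W_\alpha$ and $\widehat{W_\alpha}$. Your observation that the averaged weights $\widehat{\alpha}_i$ remain strictly positive is the only small point to check, and you have handled it.
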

 
\begin{theorem}\label{th6} Let $\alpha=(\alpha_i)_{i\in\N}$ be a sequence of strictly positive numbers, and let $\beta:=\exp(\alpha)$. \ The following estimate for the mean limit of $W_\alpha$ holds:
$$r(W_\alpha)\leq \log(r(W_\beta))= \ell(W_\alpha).$$
\end{theorem}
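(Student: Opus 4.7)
The plan is to exploit the identity $W_{\exp(\widehat{\alpha}^{(n)})}=\Delta^{(n)}(W_\beta)$ recorded just before Example~\ref{ex1}, together with Yamazaki's convergence theorem from \cite{yam}, which asserts that $\|\Delta^{(n)}(S)\|\to r(S)$ for any bounded operator $S$. The identity is the bridge that will translate a statement about mean iterates of $W_\alpha$ into a statement about Aluthge iterates of $W_\beta$. After establishing the equality $\ell(W_\alpha)=\log(r(W_\beta))$, the inequality $r(W_\alpha)\leq \ell(W_\alpha)$ follows by iterating the inequality $r(W_\alpha)\leq r(\widehat{W_\alpha})$ proved in the preceding theorem.

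For the equality, I would compute the norm of $W_{\exp(\widehat{\alpha}^{(n)})}$ from both sides of the identity. As a weighted shift, it has norm $\sup_{i\in\N}\exp(\widehat{\alpha}_i^{(n)})$. Since $\alpha$ is bounded (otherwise $W_\alpha$ itself would not be a bounded operator) and each $\widehat{\alpha}^{(n)}_i$ is a convex combination of entries of $\alpha$, the sequences $\widehat{\alpha}^{(n)}$ are uniformly bounded, and the continuity and strict monotonicity of $\exp$ allow the interchange $\sup_i\exp(\widehat{\alpha}_i^{(n)})=\exp(\sup_i\widehat{\alpha}_i^{(n)})=\exp(\|\widehat{W_\alpha}^{(n)}\|)$. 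On the other hand, Yamazaki's theorem gives $\|\Delta^{(n)}(W_\beta)\|\to r(W_\beta)$. Combining and taking logarithms (legitimate since $r(W_\beta)\geq \beta_0=\exp(\alpha_0)>0$) produces $\|\widehat{W_\alpha}^{(n)}\|\to \log(r(W_\beta))$, that is, $\ell(W_\alpha)=\log(r(W_\beta))$.

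For the inequality $r(W_\alpha)\leq \ell(W_\alpha)$, note that $\widehat{W_\alpha}$ is again a positive unilateral weighted shift, so the theorem proved just above can be applied iteratively to yield $r(W_\alpha)\leq r(\widehat{W_\alpha})\leq r(\widehat{W_\alpha}^{(2)})\leq \cdots \leq r(\widehat{W_\alpha}^{(n)})\leq \|\widehat{W_\alpha}^{(n)}\|$ for every $n\in\N$. Passing to the limit gives $r(W_\alpha)\leq \ell(W_\alpha)=\log(r(W_\beta))$, as required. The only non-routine ingredient is Yamazaki's theorem; the remaining steps—the $\exp$/$\sup$ interchange and the iteration of the spectral-radius bound—are straightforward once the identity $W_{\exp(\widehat{\alpha}^{(n)})}=\Delta^{(n)}(W_\beta)$ is in hand.
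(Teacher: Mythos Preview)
Your proposal is correct. For the equality $\ell(W_\alpha)=\log(r(W_\beta))$ you follow essentially the paper's argument, just packaged through the identity $W_{\exp(\widehat{\alpha}^{(n)})}=\Delta^{(n)}(W_\beta)$ rather than unrolling the weight formulas explicitly; both ultimately rest on Yamazaki's theorem.

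For the inequality $r(W_\alpha)\leq\ell(W_\alpha)$, however, your route differs from the paper's. You iterate the bound $r(W_\alpha)\leq r(\widehat{W_\alpha})$ from the preceding theorem and pass to the limit. The paper instead continues the computation of $\log(r(W_\beta))$ via the standard spectral-radius formula for weighted shifts (equation~(\ref{eqq})) to obtain the explicit expression
\[
\ell(W_\alpha)=\lim_{n\to\infty}\sup_{i\in\N}\frac{1}{n}\sum_{j=0}^{n-1}\alpha_{i+j},
\]
and then compares this directly with $r(W_\alpha)=\lim_{n}\big(\sup_i\prod_{j=0}^{n-1}\alpha_{i+j}\big)^{1/n}$ using the arithmetic--geometric mean inequality. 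Your argument is tidier and makes better use of results already established in the paper (the AM--GM step is still present, but hidden inside the proof of the preceding theorem). The paper's approach, on the other hand, delivers the additional closed formula for $\ell(W_\alpha)$ displayed above as a by-product, which is of independent interest.

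One small slip: the justification ``$r(W_\beta)\geq\beta_0$'' is not valid for arbitrary weighted shifts. What is true, and sufficient here, is $r(W_\beta)\geq\inf_{i}\beta_i\geq 1$, since each $\alpha_i>0$ forces $\beta_i=e^{\alpha_i}>1$; this is enough to make $\log(r(W_\beta))$ well defined.
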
 

\begin{proof} Using the iterates of Aluthge and mean transforms for the weighted shift $W_{\alpha}$ and  Yamazaki's formula for the spectral radius (via the iterates of the Aluthge transform), we get
 \begin{eqnarray} \label{neweq}
  r(W_\beta)&=&\underset{n\rightarrow\infty}{ \lim}\|\Delta^{(n)}(W_\beta)\| 
 =\underset{n\rightarrow\infty}{ \lim}\underset{i\in\N}{\sup}\Big(\prod_{j=0}^{n}\beta_{i+j} ^{(_j ^n)} \Big)^{\frac{1}{2^n}}\;\;(\text{Yamazaki's formula}).
 \end{eqnarray}
Using  (\ref{neweq}) and the particular form of the mean iterates of a unilateral weighted shift (cf. (\ref{eqm})), we obtain
 \begin{eqnarray*}
\ell(W_\alpha)&=& \underset{n\rightarrow\infty}{ \lim} \big\|\widehat{W_\alpha}^{(n)}\big\|
\\&=& \underset{n\rightarrow\infty}{ \lim}\underset{i\in\N}{\sup} \dfrac{\sum_{i=0}^n(^n_j)\alpha_{i+j}}{2^n}
\\&=&\underset{n\rightarrow\infty}{ \lim}\underset{i\in\N}{\sup} \dfrac{\sum_{i=0}^n(^n_j) \log(\beta_{i+j})}{2^n}\;\;\Big( \text{ recall that  }\;\;\log(\beta_k)=\alpha_k, \text{ for all }   k\in\N \Big)
\\&=&\log\Big(\underset{n\rightarrow\infty}{ \lim}\underset{i\in\N}{\sup}\Big(\prod_{j=0}^{n}\beta_{i+j} ^{(_j ^n)} \Big)^{\frac{1}{2^n}} \Big)
\\&=&\log\Big(\underset{n\rightarrow\infty}{ \lim}\big\|\Delta^{(n)}(W_\beta)\big\| \Big)
\\&=& \log\big(r(W_\beta)\big)~~~~~~~~~~~~~~~\;\;\; \;\;(\text{Yamazaki's formula for the spectral radius \cite{yam}})
\end{eqnarray*}

 \begin{eqnarray*}
&=&\log\Big(\underset{n\rightarrow\infty}{ \lim}\Big(\underset{i\in\N}{\sup}\prod_{j=0}^{n-1}\beta_{i+j}  \Big)^{\frac{1}{n}} \Big)  \;\;(\text{spectral radius formula for weighted shifts \cite{Shields}})
 \\&=&\underset{n\rightarrow\infty}{ \lim}\Big(\underset{i\in\N}{\sup}\frac{1}{n}\log\Big(\prod_{j=0}^{n-1}\beta_{i+j}  \Big) \Big)
=\underset{n\rightarrow\infty}{ \lim}\Big(\underset{i\in\N}{\sup}\frac{1}{n}\sum_{j=0}^{n-1}\log(\beta_{i+j})\Big)
 \\&=&\underset{n\rightarrow\infty}{ \lim}\Big(\underset{i\in\N}{\sup}\frac{1}{n}\sum_{j=0}^{n-1}\alpha_{i+j}\Big)
 \\&\geq & \underset{n\rightarrow\infty}{ \lim}\Big(\underset{i\in\N}{\sup} 
 \prod_{j=0}^{n-1}\alpha_{i+j}  \Big)^{\frac{1}{n}} \;\;\big(\text{using the arithmetic-geometric mean inequality} \big)
 \\&=& r(W_\alpha). 
\end{eqnarray*}  
\end{proof}

\begin{remark}
In general the inequality in Theorem \ref{th6} can be strict, as shown in Example \ref{ex1}.
\end{remark}
  
On the other hand, when the sequence $(\alpha_i)_{i\in\N}$ converges we have the following. 

\begin{proposition} \label{prop4}
Let $\alpha=(\alpha_i)_{i\in\N}$ be a sequence of positive numbers ($\alpha_i>0$) and assume that {\color{red}$\alpha$} converges. \ Then 
$$\ell(W_\alpha)=r(W_\alpha)=\underset{i\rightarrow \infty}{\lim}~~\alpha_i .$$
\end{proposition}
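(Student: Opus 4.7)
Let me sketch the plan. Set $L:=\lim_{i\to\infty}\alpha_i\ge 0$. The strategy is to prove three facts: (a) $r(W_\alpha)=L$; (b) $\ell(W_\alpha)\le L$; and (c) $r(W_\alpha)\le \ell(W_\alpha)$. Fact (c) is immediate from Theorem \ref{th6}, so the work lies in (a) and (b). Once all three are in place, the chain $L=r(W_\alpha)\le \ell(W_\alpha)\le L$ forces equality.

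For (a), I would apply the spectral radius formula (\ref{eqq}). Fix $\varepsilon>0$, choose $N$ so that $|\alpha_k-L|<\varepsilon$ for $k\ge N$, and set $M:=\|W_\alpha\|=\sup_i\alpha_i$. For any $k\in\N$ and $n\ge N$, only the first $N$ factors in $\alpha_k\cdots\alpha_{k+n-1}$ can exceed $L+\varepsilon$, so
$$\alpha_k\alpha_{k+1}\cdots\alpha_{k+n-1}\le M^{N}(L+\varepsilon)^{n-N}.$$
Taking supremum over $k$, then $n$-th roots, and letting $n\to\infty$, yields $r(W_\alpha)\le L+\varepsilon$. For the matching lower bound, evaluating the product starting at $k=N$ gives $(\alpha_N\cdots\alpha_{N+n-1})^{1/n}\ge L-\varepsilon$, so $r(W_\alpha)\ge L-\varepsilon$. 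Since $\varepsilon$ is arbitrary, $r(W_\alpha)=L$.

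For (b), I would use the explicit description (\ref{eqm}) of the $n$-th mean iterate weights, namely
$$\widehat{\alpha}_i^{(n)}=\frac{1}{2^n}\sum_{j=0}^{n}\binom{n}{j}\alpha_{i+j},$$
which is a binomial convex combination of $\alpha_i,\alpha_{i+1},\ldots,\alpha_{i+n}$. With $N$ and $\varepsilon$ as above, I split the sum at $j=N$: since $i+j\ge j\ge N$ whenever $j\ge N$, the ``good'' terms satisfy $\alpha_{i+j}\le L+\varepsilon$ uniformly in $i$, while the ``bad'' terms $j<N$ contribute at most $M$ each. This gives
$$\widehat{\alpha}_i^{(n)}\le (L+\varepsilon)+\frac{M}{2^n}\sum_{j=0}^{N-1}\binom{n}{j}$$
independently of $i$. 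The correction term is $O(n^{N-1}/2^n)\to 0$, so taking the sup over $i$ and then the limit in $n$ yields $\ell(W_\alpha)=\lim_{n\to\infty}\sup_i\widehat{\alpha}_i^{(n)}\le L+\varepsilon$, and hence $\ell(W_\alpha)\le L$.

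The main delicate point is ensuring that the bound on $\widehat{\alpha}_i^{(n)}$ is truly uniform in $i$; this is where the specific shift structure $i+j\ge j$ is exploited to guarantee that the ``bad'' indices in the sum are only those with $j<N$, no matter how small $i$ is. Everything else is routine: combining (a), (b), and Theorem \ref{th6} closes the loop and gives $\ell(W_\alpha)=r(W_\alpha)=L$.
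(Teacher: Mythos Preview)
Your argument is correct, and the technical heart of part (b)---splitting the binomial average at a fixed cutoff $N$ and using $\frac{1}{2^n}\sum_{j<N}\binom{n}{j}\to 0$---is exactly the mechanism the paper employs. The organization differs in two minor respects. First, you prove (a) from the spectral radius formula, whereas the paper simply asserts $r(W_\alpha)=\lim_i\alpha_i$ as known. Second, and more substantively, you obtain only the \emph{upper} bound $\ell(W_\alpha)\le L$ from the binomial estimate and then invoke Theorem~\ref{th6} for the lower bound, while the paper bounds $\bigl|\sup_i\widehat{\alpha}_i^{(n)}-r_0\bigr|$ two-sidedly in one stroke (estimating $|\alpha_{i+j}-r_0|$ rather than $\alpha_{i+j}$ itself), so that convergence to $r_0$ follows directly without appeal to Theorem~\ref{th6}. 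Your route is slightly less self-contained but perfectly valid; the paper's is marginally more economical in that it does not rely on the earlier result.
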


\begin{proof}We let $r_0:=r(W_\alpha)=\underset{i\rightarrow \infty}{\lim}~~\alpha_i$. \ Then, for every $\epsilon>0$ there exists $n_0\in\N$ such that $|\alpha_i-r_0|\leq {\epsilon}$ for all  $i\geq n_0$. 

On the other hand,  
$$
\dfrac{(^n_j)}{2^n}\leq K \dfrac{n^{j+1}}{2^n}\underset{n\rightarrow \infty}{\longrightarrow} 0\;\; \text{for any }\; 0\leq i\leq n_0 , \; \text{where $K$ is a fixed constant}. 
$$
Then 
$$
\dfrac{\sum_{j=0}^{n_0}(^n_j)}{2^n}\underset{n\rightarrow \infty}{\longrightarrow} 0.
$$
Hence, there exists $n_1>n_0$, such that 
$$\dfrac{\sum_{j=0}^{n_0}(^n_j)}{2^n}\leq \epsilon, \;\;\text{for all}\;\; n\geq n_1.$$
On the other hand, for $n>n_1$ we have 
\begin{eqnarray*}
\Big|\big\|\widehat{W_\alpha}^{(n)}\big\|-r_0\Big| &=& \Big|\sup_ {i\in\N}~ \alpha_i^{(n)}-r_0\Big|\\ &\leq & \sup_{i\in\N}\big|\alpha_i^{(n)}-r_0\big|\\
&=&  \sup_{i\in\N}\Big|\dfrac{\sum_{j=0}^n(^n_j)\alpha_{i+j}}{2^n}-r_0\Big|
\\ &=& \sup_{i\in\N}\Big|\dfrac{\sum_{j=0}^n(^n_j)(\alpha_{i+j}-r_0)}{2^n}\Big|
\\&\leq &  \sup_{i\in\N}\dfrac{\sum_{j=0}^n(^n_j)|\alpha_{i+j}-r_0|}{2^n}
\\&=&  \sup_{i\in\N}\underset{\leq M \epsilon }{\underbrace{\dfrac{\sum_{j=0}^{n_0}(^n_j)|\alpha_{i+j}-r_0|}{2^n}}}+\underset{\leq \dfrac{\sum_{j=n_0+1}^{n}(^n_j)~~\epsilon}{2^{n+1}}\leq  {\epsilon}}{\underbrace{\dfrac{\sum_{j=n_0+1}^{n}(^n_j)|\alpha_{i+j}-r_0|}{2^n}}}
\\&\leq & \epsilon(M+1),
\end{eqnarray*}
where $M:=\underset{i\in\N}{\sup} |\alpha_i-r_0|$. \ This completes the proof. 
\end{proof}

\begin{remark}
Any semi-hyponormal unilateral weighted shift has a weight sequence satisfying the hypothesis of Proposition \ref{prop4}.
\end{remark}

{\bf Acknowledgments.} The authors would like to thank the referee for carefully reading our manuscript and making many valuable suggestions.

\end{document}